\newtheorem{theorem}{Theorem}
\newtheorem{corollary}{Corollary}
\newtheorem{lemma}{Lemma}
\newcommand{\p}{\mathbb{P}}
\newcommand{\R}{\mathbb{R}}
\newtheorem*{rem*}{Remark}
\def\({\left(}
\def\){\right)}
\def\[{\left[}
\def\]{\right]}
\def\<{\langle}
\def\>{\rangle}
\title{Supremum distribution of Bessel process of drifting Brownian motion}
\author{A.~Py\'c, G.~Serafin and T. \.Zak}
\address{Institute of Mathematics and Computer Science, Wroc{\l}aw University of
Technology, Wybrze\.ze Wyspia\'nskiego 27, 50-370 Wroc{\l}aw, Poland.}
\email{andrzej.pyc@pwr.edu.pl, grzegorz.serafin@pwr.edu.pl, tomasz.zak@pwr.edu.pl}
\keywords{Drifting Brownian motion, Bessel process, supremum distribution.}
\subjclass[2010]{60J60}
\begin{document}

\begin{abstract}
Let $(B_t^{(1)},B_t^{(2)},B_t^{(3)}+\mu t)$ be a three-dimensional Brownian motion with drift $\mu$, starting at the origin.
Then $X_t=\|(B_t^{(1)},B_t^{(2)},B_t^{(3)}+\mu t)\|$, its distance from the starting point, is a diffusion with many applications. 
We investigate the distribution of the supremum of $(X_t)$, give an infinite-series formula for its density and an exact estimate by elementary functions.
\end{abstract}

\maketitle

\section{introduction}
In his famous paper (\cite{w}, see also \cite{rw}, page 436) David Williams showed how one can decompose the paths of a transient one-dimensional diffusion at its maximum (or minimum).
One of the best-known examples of such decomposition is that of $B(t)+\mu t$, Brownian motion with a positive drift $\mu$, as a Brownian motion with a negative drift $B(t)-\mu t$ and a diffusion with a generator  
\begin{equation}\label{generator}
\Delta_{\mu} = \frac12 \frac{d^2}{dx^2} +  \mu \coth(\mu x) \frac{d}{d x}.
\end{equation}
Namely, for $\mu >0$ let $B(t)-\mu t$ be a Brownian motion with constant drift $-\mu$, $Z_t$ a diffusion with generator (\ref{generator}), started at zero, and let $\gamma$ be a random variable with exponential distribution with parameter $\frac{1}{2\mu}$. Assume that $B_t$, $Z_t$ and $\gamma$, defined on the same probability space, are independent. Define $\tau= \inf\{t>0:\ B_t-\mu t=-\gamma\}$.
Then the following process
$$X_t=
\left\{
\begin{array}{ll}
B_t-\mu t, &\ \ 0\le t\le \tau,\\
Z_{t-\tau}-\gamma, & \ \  t>\tau,
\end{array}
\right.
$$
is $B_t+\mu t$, a Brownian motion with positive drift $\mu$. Williams also showed that $Z_t$ can be viewed as a Brownian motion with drift conditioned to hit infinity before zero.

\medskip

Later on a process  with generator (\ref{generator}), known as a hyperbolic Bessel proces (\cite{ry}, p.\, 357) 
or as a Bessel process of drifting Brownian motion and denoted BES$(3,\mu)$ (\cite{pr}, \cite{g}),  
appeared also in many papers as a drifting Brownian motion conditioned to not hit zero. Recently, 
it was proved in \cite{agz} that it can be obtained as a deterministic involution of Brownian motion with drift $\mu$.
If $\mu=1$ then $(Z_t)$ is a radial part of a hyperbolic Brownian motion in three-dimensional hyperbolic space.

\medskip

The transition density function of $(Z_t)$ is well-known, compare \cite{pr} or \cite{g} but to our best knowledge, 
distribiution of different functionals of this process have not been investigated yet. In this paper
we investigate process $(Z_t)$ killed on exiting interval $(0,\, r_0)$ and give a formula describing distribution 
of $M_t=\sup_{s\le t} Z_s$, the supremum of the process $(Z_t)$.
Because the formula is given as an infinite series, we give its exact estimate using elementary functions.
Moreover, our method of estimation applied to a function $ss_y(v,t)$ used in a handbook by Borodin and Salminen \cite{bs}
give very precise estimate of this function (cf. Remark after Theorem 9).

\section{Transition density of a Bessel process of drifting Brownian motion, killed on exiting interval $(0,\ r_0)$}
Let $Z(t)$ be a diffusion on $(0,\infty)$ generated by operator (1.1) with $\mu>0$.
The speed measure of this diffusion is equal to $m(dy)=\sinh^2 (\mu y)\, dy$, hence with respect to $m(dy)$, the transition density of $Z(t)$ has the following form (cf. \cite{pr}):
\begin{equation}\label{densfree}
p(t;x,y)=\frac{e^{-\mu^2t/2}\left(e^{-(y-x)^2/(2t)}-e^{-(y+x)^2/(2t)}\right)}{\sqrt{2\pi t}\sinh(\mu x) \sinh(\mu y)}.
\end{equation}
By $(Z^{r_0}_t)$ we will denote the process killed on exiting $(0,\, r_0)$. For positive $\mu$, the process starting from $x>0$ cannot reach zero and drifts to infinity so that almost all trajectories will be killed at $r_0$. Even if $(Z_t)$ starts from zero, with probability one it will never visit zero again.

Transition density $p^{r_0}(t;x,y)$ of $(Z^{r_0}_t)$, with respect to  $m(dy)$, is a solution of the following Dirichlet problem:
\begin{equation}\label{system}
\begin{cases} \frac{\partial}{\partial t} p^{r_0}(t;x,y) = \Delta_{\mu} p^{r_0}(t;x,y) & t>0,\ x \in (0,r_0),\ y \in (0,r_0), \\ p^{r_0}(t;x,r_0) = 0 & t>0,\ x \in (0,r_0), \\ \lim_{t \to 0} p^{r_0}(t;x,y) = \delta_x(y) &x \in (0,r_0),\ y \in (0,r_0). \end{cases}
\end{equation}
Because of killing, $p^{r_0}(t;x,r_0)=0$ for all $t>0$. Moreover, if $\mu>0$ then by (\ref{densfree}) we have $\limsup_{y\to 0}p(t;x,y)<\infty$ for all $t,\, x>0$ and, because $p^{r_0}(t;x,y) \le p(t;x,y)$, hence $\limsup_{y\to 0}p^{r_0}(t;x,y)<\infty$.
 
We will use the separation variable technique used in mathematical physics. Suppose that 
$$
p^{r_0}(t;x,y) = Y(y) T(t).
$$ 
Then, by (\ref{generator}), the first equation of the system (\ref{system}) takes on the form
\begin{equation}\label{Dirichlet}
Y(y) T'(t)  = \left(\frac{1}{2} Y''(y) + \mu \coth(\mu y) Y'(y)\right) T(t).
\end{equation}
We add $\frac{1}{2}(\lambda^2+\mu^2)T(t)Y(y)$ to both sides of (\ref{Dirichlet}) and get
$$
Y(y) [T'(t) + \frac{1}{2}(\lambda^2+\mu^2)T(t)] = T(t) [\frac{1}{2}Y''(y) + \mu \coth(\mu y) Y'(y) + \frac{1}{2}(\lambda^2+\mu^2)Y(y)].
$$
The solution of the differential equation
$
T'(t) + \frac{1}{2}(\lambda^2+\mu^2)T(t) = 0
$
is $T(t)=c_1e^{\frac{-(\lambda^2+\mu^2)t}{2}}$.
If in the second equation
$$
Y''(y) + 2 \mu \coth(\mu  y) Y'(y) + (\lambda^2+\mu^2)Y(y) = 0
$$
we substitute  $Y(y)=u(y)/\sinh(\mu y)$, we get
$$
\frac{1}{\sinh \mu y} (u''(y)+\lambda^2 u(y))=0.
$$
By the above discussion  we know that $\limsup_{y\to 0} Y(y)<\infty$ and $Y(y)=u(y)/\sinh(\mu y)$, hence $\lim_{y\to 0} u(y)=0$.
This implies that with the boundary conditions $u(0)=u(r_0)=0$ equation  $u''(y)+\lambda^2 u(y)=0$
is a special case of the general Sturm -- Liouville problem \cite{H}.
It is well known (cf. \cite{H}, Theorem 4.1 and Exercise 4.2, page 337) that this Sturm -- Liouville problem for $u(y)$ has a solution if and only if $\lambda=\frac{n\pi}{r_0}$, $n=1,2,...$
and this solution (up to a multiplicative constant) is given by $u_n(y)=\sin(n\pi y/r_0)$. 
Thus we may expand $p^{r_0}(t;x,y)$ as
\begin{equation}\label{series1}
p^{r_0}(t;x,y) = \sum_{n=1}^\infty \left[ \left(\frac{a_n(x)}{\sinh (\mu y)} \sin\left(\frac{n \pi y}{r_0}\right)\right) \exp\left(\frac{-(n^2 \pi^2 / r_0^2 +\mu^2)t}{2} \right)  \right].
\end{equation}

To determine coefficient $a_n(x)$  let us multiply the equation (\ref{series1}) by $a_k(x)\frac{\sin(\frac{k \pi y}{r_0})}{\sinh (\mu y)}$ and integrate the product over $(0,r_0)$ with respect to the measure $\sinh^2 (\mu y) dy$:
\begin{eqnarray}\label{series2}
&\int_0^{r_0} p^{r_0}(t;x,y) a_k(x)\frac{\sin(\frac{k \pi y}{r_0})}{\sinh (\mu y)}\sinh^2 (\mu y) dy = \\ \nonumber 
= &\int_0^{r_0}  \sum_{n=1}^\infty \left[ \left(\frac{a_n(x) a_k(x)}{\sinh^2 (\mu y)} \sin\left(\frac{n \pi y}{r_0}\right) \sin\left(\frac{k \pi y}{r_0}\right)\right)\exp\left(\frac{-(n^2 \pi^2 / r_0^2 +\mu^2)t}{2} \right)  \right] \sinh^2 (\mu y) dy.
\end{eqnarray}
Observe that
$$
\int_0^{r_0} a_n(x) a_k(x) \sinh^2 (\mu y) \frac{\sin(\frac{n \pi y}{r_0})}{\sinh(\mu y)} \frac{\sin(\frac{k \pi y}{r_0})}{\sinh (\mu y)} dr = a_n^2(x) \frac{r_0}{2} \delta_k^n,
$$
where $\delta_k^n$ is the Kronecker delta. Hence the right-hand side of (\ref{series2}) is equal to
$$
\frac{r_0}{2} a_n^2(x) \exp \left(\frac{-\mu^2 t}{2} - \frac{n^2 \pi^2 t}{2 r_0^2} \right).
$$
Now, let $t \to 0$. We have $\exp\left(-\frac{(n^2 \pi^2/r_0^2 + )t}{2} \right) \to 1$ and, using the third condition from (\ref{system}) for the left-hand side of (\ref{series2}), we get
$$
\frac{a_k(x) \sin\left(\frac{k \pi x}{r_0}\right)}{\sinh (\mu x)} = \frac{r_0}{2} a_k^2(x),
$$
hence
$$
a_k(x) = \frac{2 \sin\left(\frac{k \pi x}{r_0}\right) }{r_0 \sinh (\mu x)}. 
$$
To sum up, we have just proved the following theorem.

\medskip

\begin{theorem}
Transition density (with respect to the measure $\sinh^2 (\mu y)\, dy$) of the Bessel process of drifting Brownian motion, starting from the point $x \in (0, r_0)$ and killed at $r_0$, is given by the following formula

\begin{equation}\label{series}
p^{r_0}(t;x,y) = \sum_{n=1}^\infty \left[ \left(\frac{2 \sin\left(\frac{n \pi x}{r_0}\right) \sin\left(\frac{n \pi y}{r_0}\right) }{ r_0 \sinh (\mu x) \sinh (\mu y)}\right)  \exp\left(-\frac{(n^2 \pi^2  / r_0^2 +\mu^2)t}{2} \right)  \right].
\end{equation}

\end{theorem}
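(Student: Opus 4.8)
The plan is to verify directly that the series in \pref{series} solves the Dirichlet problem \pref{system}, since the derivation given above is formal (interchange of sum and integral, termwise limit as $t\to 0$) and so really serves as motivation rather than proof. First I would fix $\delta>0$ and note that on $t\ge\delta$ the series converges absolutely and uniformly on $(0,r_0)\times(0,r_0)$, together with all its term-by-term $t$- and $y$-derivatives, because each term is bounded by $\frac{2}{r_0\sinh(\mu x)\sinh(\mu y)}e^{-n^2\pi^2\delta/(2r_0^2)}$ up to polynomial factors coming from differentiation, and $\sum_n n^k e^{-cn^2}<\infty$. This legitimizes differentiating under the summation sign, so the first equation of \pref{system} follows from the eigenfunction identities established above, namely that $y\mapsto \sin(n\pi y/r_0)/\sinh(\mu y)$ is an eigenfunction of $\Delta_\mu$ with eigenvalue $-(n^2\pi^2/r_0^2+\mu^2)/2$; the boundary condition $p^{r_0}(t;x,r_0)=0$ is immediate since $\sin(n\pi)=0$ for every $n$.

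The remaining and genuinely delicate point is the initial condition $\lim_{t\to 0}p^{r_0}(t;x,y)=\delta_x(y)$, interpreted weakly against test functions integrated with respect to $m(dy)=\sinh^2(\mu y)\,dy$. Here I would argue that for a smooth $\varphi$ supported in $(0,r_0)$, writing $g(y)=\varphi(y)\sinh(\mu y)$, the integral $\int_0^{r_0}p^{r_0}(t;x,y)\varphi(y)\sinh^2(\mu y)\,dy$ equals $\frac{1}{\sinh(\mu x)}\sum_{n=1}^\infty \widehat{g}_n\,\sin(n\pi x/r_0)\,e^{-(n^2\pi^2/r_0^2+\mu^2)t/2}$, where $\widehat{g}_n=\frac{2}{r_0}\int_0^{r_0}g(y)\sin(n\pi y/r_0)\,dy$ are the sine-Fourier coefficients of $g$ on $(0,r_0)$. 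As $t\to 0$ this tends to $\frac{1}{\sinh(\mu x)}\sum_n \widehat{g}_n\sin(n\pi x/r_0)=\frac{g(x)}{\sinh(\mu x)}=\varphi(x)$ by the $L^2$-completeness of $\{\sin(n\pi y/r_0)\}$ and the rapid decay of $\widehat{g}_n$ for smooth compactly supported $g$ (which also justifies passing the limit inside the sum). An alternative, perhaps cleaner, route is to invoke general Sturm--Liouville theory (e.g.\ the reference \cite{H} already cited): the operator $\Delta_\mu$ on $(0,r_0)$ with Dirichlet condition at $r_0$ and the natural boundary behaviour at $0$ (forced by $\limsup_{y\to0}p^{r_0}(t;x,y)<\infty$, shown above) has a self-adjoint realization in $L^2(m)$ whose spectral decomposition is exactly the orthonormal system $\{c_n\sin(n\pi y/r_0)/\sinh(\mu y)\}$, and then \pref{series} is simply the spectral representation of the heat semigroup $e^{t\Delta_\mu}$, which automatically has the correct initial trace.

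I expect the main obstacle to be handling the endpoint $y=0$ rigorously: the measure $m$ degenerates there only mildly ($\sinh^2(\mu y)\sim\mu^2y^2$), but one must make sure that the "Dirichlet-type" behaviour at $0$ is the correct self-adjoint boundary condition and that no boundary term is lost when one integrates by parts to get the orthogonality relation $\int_0^{r_0}\frac{\sin(n\pi y/r_0)\sin(k\pi y/r_0)}{\sinh^2(\mu y)}\,m(dy)=\frac{r_0}{2}\delta^n_k$ used above. Since the substitution $Y(y)=u(y)/\sinh(\mu y)$ converts $\Delta_\mu$ into the plain operator $\frac12 u''$ with $u(0)=u(r_0)=0$, this amounts to the classical Dirichlet Laplacian on $(0,r_0)$, and the transfer is clean once one checks that $u\in L^2(dy)$ is equivalent to $Y\in L^2(m)$ and that the domains match; this is where I would spend the most care. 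Everything else — uniform convergence for $t$ bounded away from $0$, term-by-term differentiation, and vanishing at $r_0$ — is routine given the Gaussian-type decay $e^{-n^2\pi^2 t/(2r_0^2)}$ in the exponent.
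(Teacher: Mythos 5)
Your proposal is correct in substance but follows a genuinely different route from the paper. The paper \emph{derives} the formula: it separates variables, finds the eigenfunctions $\sin(n\pi y/r_0)/\sinh(\mu y)$ of $\Delta_\mu$ (via the same substitution $Y=u/\sinh(\mu y)$ you use), postulates the expansion $p^{r_0}(t;x,y)=\sum_n a_n(x)\sin(n\pi y/r_0)e^{-\lambda_n t}/\sinh(\mu y)$, and then determines $a_n(x)$ by multiplying by an eigenfunction, integrating against $m(dy)$, and letting $t\to0$ to use the initial condition in \pref{system} --- a formal coefficient-matching argument. You instead start from the candidate series \pref{series} and \emph{verify} it: termwise differentiation under Gaussian-type decay gives the equation and the vanishing at $y=r_0$, and the weak initial condition follows from completeness of the sine system applied to $g(y)=\varphi(y)\sinh(\mu y)$ (your computation of the coefficients $\widehat g_n$ is right), or, more cleanly, from the spectral representation of the heat semigroup of the self-adjoint Dirichlet realization of $\Delta_\mu$ in $L^2(m)$. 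What your route buys is rigor on exactly the points the paper treats formally (interchange of sum, integral and limit); what it costs is that the pure verification half does not by itself identify the series with the transition density: showing the series solves \pref{system} proves it is \emph{a} solution, and you still need either a uniqueness statement for that degenerate boundary-value problem or the semigroup identification, namely that the generator of the killed process is precisely the self-adjoint extension corresponding to $u(0)=u(r_0)=0$ after the unitary map $Y\mapsto u=Y\sinh(\mu y)$ from $L^2(m)$ to $L^2(dy)$. That is indeed the crux you flag; it is settled by noting that for $\mu>0$ the point $0$ is an entrance boundary (the process started in $(0,r_0)$ never reaches $0$, consistent with $\limsup_{y\to0}p(t;x,y)<\infty$ from \pref{densfree}), so no extra condition at $0$ is available or needed and the boundedness requirement singles out $u(0)=0$. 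With that point made explicit, your argument is complete and in fact complementary to the paper's: the paper finds the formula, your scheme proves it is the kernel.
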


\medskip

We will give another representation of the transition density, using the Poisson summation formula (cf. \textbf{13.4} in \cite{Z}): for any function $g$ absolutely integrable on $(-\infty,\ \infty)$
\begin{equation} \label{poisson}
\sum_{n=- \infty}^\infty g(n) = \sum_{k=- \infty}^\infty \int_{- \infty}^\infty g(x) e^{-2 \pi i k x} dx.
\end{equation}
First, observe that series (\ref{series}) can be written as 
$$
p^{r_0}(t;x,y) = \frac{1}{2} \sum_{n = -\infty}^\infty \left[ \left(\frac{2 \sin\left(\frac{n \pi x}{r_0}\right) \sin\left(\frac{n \pi y}{r_0}\right) }{r_0 \sinh (\mu x) \sinh (\mu y)}\right)  \exp\left(-\frac{(n^2 \pi^2  / r_0^2 +\mu^2)t}{2} \right)  \right].
$$ 
For the Poisson summation formula we need to compute the Fourier transform of any term of the above series, taking $n$ as a variable.
Put $z$ instead of $n$, then we have to evaluate the following integral:
$$
\int_{-\infty}^{\infty} \sin\left(\frac{z \pi x}{r_0}\right) \sin\left(\frac{z \pi y}{r_0}\right) 
e^{\frac{-z^2 \pi^2 t}{2 r_0^2}} e^{-2 \pi i  k z}  dz.
$$ 
In order to compute it, we use trigonometric identity  $e^{-2 \pi i  k z} = \cos(2\pi k z)-i \sin(2\pi k z)$ and observe that the integral with $\sin(2\pi k z)$ is zero, hence we have to compute only
$$
\int_{-\infty}^{\infty}  \sin\left(\frac{z \pi x}{r_0}\right) \sin\left(\frac{z \pi y}{r_0}\right) \cos(2\pi k z)
e^{\frac{-z^2 \pi^2 t}{2 r_0^2}} dz.
$$ 
Now we use identity $2\sin ax \sin bx = \cos((a-b)x) -\cos((a+b)x)$ and a formula (3.898.2) from \cite{gr} to get:
$$
\int_{-\infty}^{\infty} \sin\left(\frac{z \pi x}{r_0}\right) \sin\left(\frac{z \pi y}{r_0}\right) 
e^{\frac{-z^2 \pi^2 t}{2 r_0^2}} e^{-2 \pi i  k z}  dz =  
$$
$$
= \frac{r_0}{2 \sqrt{2 \pi t}} \left(e^{\frac{-(y-x+2k r_0)^2}{2t}} - e^{\frac{-(y+x-2k r_0)^2}{2t}} +
e^{\frac{-(y-x-2k r_0)^2}{2t}} - e^{\frac{-(y+x+2k r_0)^2}{2t}} \right).
$$
The Poisson summation formula (\ref{poisson}) gives us the following:

\begin{theorem}
Transition density (with respect to the measure $\sinh^2 (\mu y)\, dy$) of the Bessel process of drifting Brownian motion,
starting from the point $x \in (0, r_0)$ and killed at $r_0$, is given by the following formula
$$
p^{r_0}(t;x,y) = \frac{e^{\frac{-\mu^2 t}{2}}}{\sqrt{2 \pi t} \sinh(\mu x) \sinh(\mu y)} \sum_{k=- \infty}^\infty 
\left[e^{\frac{-(y-x+2k r_0)^2}{2t}} - e^{\frac{-(y+x+2k r_0)^2}{2t}} \right].
$$
\end{theorem}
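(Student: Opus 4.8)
The plan is to feed the series \pref{series} of Theorem 1 into the Poisson summation formula \pref{poisson}, carrying out the computation already set up in the paragraphs above. First I would note that the $n$-th term of \pref{series} is even in $n$ and vanishes at $n=0$, so $p^{r_0}(t;x,y) = \tfrac12 \sum_{n=-\infty}^{\infty} g(n)$, where $g(z) = \dfrac{2 e^{-\mu^2 t/2}}{r_0 \sinh(\mu x)\sinh(\mu y)}\sin\!\left(\dfrac{z\pi x}{r_0}\right)\sin\!\left(\dfrac{z\pi y}{r_0}\right) e^{-z^2\pi^2 t/(2r_0^2)}$ is viewed as a function of the real variable $z$. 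Since $g$ is the Gaussian $e^{-z^2\pi^2 t/(2r_0^2)}$ times bounded trigonometric factors, it is absolutely integrable on $\R$ for every fixed $t>0$, so \pref{poisson} applies and gives $p^{r_0}(t;x,y) = \tfrac12 \sum_{k=-\infty}^{\infty}\widehat g(k)$ with $\widehat g(k)=\int_{-\infty}^{\infty}g(z)e^{-2\pi i k z}\,dz$.

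Next I would evaluate $\widehat g(k)$ exactly as indicated in the text: split $e^{-2\pi i k z}=\cos(2\pi k z)-i\sin(2\pi k z)$ and discard the sine term by oddness; use $2\sin(az)\sin(bz)=\cos((a-b)z)-\cos((a+b)z)$ to turn the product of the two sines against $\cos(2\pi k z)$ into four cosines of the form $\cos\!\left(\left(\tfrac{\pi x}{r_0}\pm\tfrac{\pi y}{r_0}\pm 2\pi k\right)z\right)$; and integrate each against the Gaussian by the classical formula (Gradshteyn--Ryzhik 3.898.2) $\int_{-\infty}^{\infty}e^{-\beta z^2}\cos(c z)\,dz=\sqrt{\pi/\beta}\,e^{-c^2/(4\beta)}$. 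Collecting the four terms yields $\widehat g(k)=\dfrac{e^{-\mu^2 t/2}}{\sqrt{2\pi t}\,\sinh(\mu x)\sinh(\mu y)}\left(e^{-(y-x+2kr_0)^2/(2t)}-e^{-(y+x-2kr_0)^2/(2t)}+e^{-(y-x-2kr_0)^2/(2t)}-e^{-(y+x+2kr_0)^2/(2t)}\right)$, which is the displayed Fourier transform.

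Finally comes the bookkeeping of $\sum_{k\in\Z}\widehat g(k)$. This series converges super-exponentially in $|k|$ for each fixed $t>0$ because of the factors $e^{-(\cdot)^2/(2t)}$, so the reindexing below is legitimate: the substitution $k\mapsto -k$ turns $\sum_k e^{-(y-x-2kr_0)^2/(2t)}$ into $\sum_k e^{-(y-x+2kr_0)^2/(2t)}$ and $\sum_k e^{-(y+x-2kr_0)^2/(2t)}$ into $\sum_k e^{-(y+x+2kr_0)^2/(2t)}$, so $\sum_{k\in\Z}\widehat g(k)=\dfrac{2 e^{-\mu^2 t/2}}{\sqrt{2\pi t}\,\sinh(\mu x)\sinh(\mu y)}\sum_{k\in\Z}\left(e^{-(y-x+2kr_0)^2/(2t)}-e^{-(y+x+2kr_0)^2/(2t)}\right)$. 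Multiplying by the leading $\tfrac12$, the factors $\tfrac12$ and $2$ cancel and the claimed formula follows. There is no serious obstacle here — the real work was done in computing the Fourier integral above — so the only point needing care is the verification of the hypotheses of \pref{poisson} (absolute integrability of $g$ and convergence of both series), which is immediate from the Gaussian decay; as an independent check one may note that the $k=0$ term of the final formula is precisely the free transition density \pref{densfree}, the remaining terms being the method-of-images corrections enforcing the Dirichlet condition at $r_0$ and its reflections, or, if one prefers, verify directly that the right-hand side solves the system \pref{system}.
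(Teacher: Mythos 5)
Your proposal is correct and follows essentially the same route as the paper: rewrite \pref{series} as $\tfrac12\sum_{n\in\Z}$, apply the Poisson summation formula \pref{poisson}, compute the Fourier transform via the cosine splitting, the product-to-sum identity and Gradshteyn--Ryzhik 3.898.2, and then reindex $k\mapsto -k$ to collapse the four Gaussian terms into the two in the statement. The only difference is that you spell out the final reindexing and the integrability hypotheses explicitly, which the paper leaves implicit.
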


\medskip

\section{Exit time}

Let us consider $M_t = \sup_{s\le t} Z_s$, the supremum of the Bessel process of drifting Brownian motion.
The distribution of $(M_t)$ is closely related to the distribution of the time when the process $(Z_t)$ exits the interval $(0,\, r_0)$.
Recall that for $\mu>0$ process $(Z_t)$ exits $(0,\, r_0)$ at the point $r_0$.
For $r_0>0$ let us define $ \tau_{r_0} = \inf\{s: Z_s>r_0\}$.
Distribution of $(M_t)$ and the survival probability of the killed process $(Z^{r_0}_t)$ are related by the following formula:
$$
\mathbb{P}^x(M_t < r_0) = \mathbb{P}^x(\tau_{r_0}> t) = \int_0^{r_0} p^{r_0}(t;x,y) \sinh^2 (\mu y) dy.
$$

For fixed $\mu$ and $r_0$ series (\ref{series}) is uniformly convergent for $x,\, y\in [0,\, r_0]$, 
because there holds inequality $|\frac{\sin(n\pi x/r_0)\sin(n\pi y/r_0)}{\sinh(\mu x)\sinh(\mu y)}| \le c_{\mu, \, r_0} n^2$
and $\sum n^2\exp(-n^2\pi^2/(2r_0^2))<\infty$.
Thus we may integrate the series term by term. Integrating $n$-th term, we get (compare \cite{gr}, formula 2.671)
$$
\int_0^{r_0} \sin\left(\frac{n \pi y}{r_0}\right) \sinh(\mu y) dy = \frac{(-1)^{n+1} \pi r_0 n \sinh(\mu r_0) }{(n^2 \pi^2 + \mu^2 r_0^2)},
$$
so that we may write the following:

\begin{theorem}
For $t,\, r_0>0$ and $x\in (0,\, r_0)$ the following formula holds
$$
\mathbb{P}^x(M_t < r_0) = \mathbb{P}^x(\tau_{r_0}> t) =
$$
\begin{equation}\label{supremum}
\sum_{n=1}^\infty \left[ \left(\frac{(-1)^{n+1}2 \pi n \sinh(\mu r_0)\sin(n \pi x /r_0)}{\sinh(\mu x) (n^2 \pi^2 + \mu^2 r_0^2)}\right)  \exp\left(-\frac{(n^2 \pi^2/r_0^2 + \mu^2 )t}{2} \right)  \right].
\end{equation}
\end{theorem}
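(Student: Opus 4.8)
The plan is to obtain (\ref{supremum}) by integrating the series (\ref{series}) of Theorem 1 against the speed measure $\sinh^2(\mu y)\,dy$ over $(0,r_0)$, term by term. First I would record the two identities
$$
\mathbb{P}^x(M_t<r_0)=\mathbb{P}^x(\tau_{r_0}>t)=\int_0^{r_0}p^{r_0}(t;x,y)\,\sinh^2(\mu y)\,dy .
$$
The first holds because $(Z_t)$ has continuous paths and starts from $x<r_0$: if $\sup_{s\le t}Z_s<r_0$ then $\tau_{r_0}>t$, while if the supremum equals $r_0$ it is attained at some $s_0\le t$ and, $r_0$ being a regular point for the diffusion, the process immediately exceeds $r_0$ thereafter, so $\tau_{r_0}=s_0\le t$; hence the two events coincide almost surely. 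The second identity expresses that $p^{r_0}(t;x,\cdot)$ is, by its definition as the transition density of $(Z^{r_0}_t)$, the density with respect to $m(dy)=\sinh^2(\mu y)\,dy$ of the sub-probability law of $Z^{r_0}_t$ carried by the survival event $\{\tau_{r_0}>t\}$, so its total $m$-mass over $(0,r_0)$ equals $\mathbb{P}^x(\tau_{r_0}>t)$.

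Next I would justify exchanging $\sum$ and $\int$. For fixed $\mu,r_0,t>0$ and $x\in(0,r_0)$ the $n$-th term of (\ref{series}), multiplied by $\sinh^2(\mu y)$, equals
$$
\frac{2\sin(n\pi x/r_0)}{r_0\sinh(\mu x)}\,\sin\!\left(\frac{n\pi y}{r_0}\right)\sinh(\mu y)\,\exp\!\left(-\frac{(n^2\pi^2/r_0^2+\mu^2)t}{2}\right),
$$
and since $|\sin(n\pi y/r_0)\sinh(\mu y)|\le\sinh(\mu r_0)$ for $y\in[0,r_0]$, its absolute value there is at most $C\exp(-n^2\pi^2 t/(2r_0^2))$ for a constant $C=C(\mu,r_0,t,x)$, while $\sum_n\exp(-n^2\pi^2 t/(2r_0^2))<\infty$. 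The Weierstrass $M$-test (equivalently, dominated convergence) then permits term-by-term integration; the same bound shows the resulting series (\ref{supremum}) converges.

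It would then remain to evaluate, for $n\ge1$,
$$
\int_0^{r_0}\sin\!\left(\frac{n\pi y}{r_0}\right)\sinh(\mu y)\,dy=\frac{(-1)^{n+1}\pi r_0 n\sinh(\mu r_0)}{n^2\pi^2+\mu^2 r_0^2},
$$
which I would get from two integrations by parts (or from formula 2.671 of \cite{gr}). Substituting this into the integrated series and collecting constants — the factor $r_0$ produced by the integral cancels the $1/r_0$ in the coefficient $a_n(x)$, turning $\tfrac{2}{r_0}\cdot\pi r_0 n$ into $2\pi n$ — yields precisely (\ref{supremum}). The only step calling for genuine care is the interchange of summation and integration, which the uniform bound above handles; the rest is bookkeeping. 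As an independent check one can instead integrate the Poisson‑resummed representation of Theorem 2 and verify that it gives the same series.
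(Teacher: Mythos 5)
Your proposal is correct and follows essentially the same route as the paper: it expresses $\mathbb{P}^x(M_t<r_0)=\mathbb{P}^x(\tau_{r_0}>t)$ as the $m$-mass $\int_0^{r_0}p^{r_0}(t;x,y)\sinh^2(\mu y)\,dy$, justifies term-by-term integration of the series \pref{series} by a uniform bound (the paper uses the bound $|\sin(n\pi x/r_0)\sin(n\pi y/r_0)/(\sinh(\mu x)\sinh(\mu y))|\le c_{\mu,r_0}n^2$, you fix $x$ and bound the $y$-factor, which is equally valid), and evaluates $\int_0^{r_0}\sin(n\pi y/r_0)\sinh(\mu y)\,dy$ exactly as in the paper. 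The extra care you take with the identity $\{M_t<r_0\}=\{\tau_{r_0}>t\}$ and with the interchange of sum and integral only makes explicit what the paper states briefly.
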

 
\bigskip

If we differentiate the above series term by term with respect to $t$ we get
$$
\frac{\pi \sinh(\mu r_0)}{\sinh(\mu x) r_0^2} \exp\left(-\frac{\mu^2 t}{2} \right) \sum_{n=1}^\infty (-1)^{n}  n \sin(n \pi x /r_0) 
\exp\left(-\frac{n^2 \pi^2 }{2r_0^2} t \right).$$ 
Fixing $\varepsilon>0$ we can  observe that the series of the derivatives is convergent uniformly  for $t\in [\varepsilon,\ \infty)$
so that differentiation term by term is justified. Since $\mathbb{P}^x(\tau_{r_0} \in dt) = - \frac{\partial}{\partial t} \mathbb{P}^x(\tau_{r_0} > t)$, the exit time density is given by the following formula.

\begin{theorem}
For fixed $r_0>0$, $0<x<r_0$ and any  $t>0$
\begin{equation}\label{time_density_1}
\mathbb{P}^x(\tau_{r_0} \in dt)  =
\frac{\pi \sinh(\mu r_0)}{\sinh(\mu x) r_0^2} \exp\left(-\frac{\mu^2 t}{2} \right) \sum_{n=1}^\infty (-1)^{n+1}  n \sin(n \pi x /r_0) 
\exp\left(-\frac{n^2 \pi^2 }{2r_0^2} t \right).
\end{equation}
\end{theorem}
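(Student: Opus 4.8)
The plan is to read off the density of $\tau_{r_0}$ by differentiating, term by term in $t$, the survival-probability series \eqref{supremum} provided by the previous theorem, and then to invoke the identity $\mathbb{P}^x(\tau_{r_0}\in dt) = -\frac{\partial}{\partial t}\mathbb{P}^x(\tau_{r_0}>t)$. Since $t\mapsto \mathbb{P}^x(\tau_{r_0}>t)$ is non-increasing, this identity is meaningful as soon as that function is differentiable; the series manipulation below in fact shows it is $C^\infty$ on $(0,\infty)$, so $\tau_{r_0}$ carries no atom and the use of $-\partial_t$ is legitimate.

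The computation itself is short. The only $t$-dependent factor in the $n$-th summand of \eqref{supremum} is $\exp\!\left(-\frac{(n^2\pi^2/r_0^2+\mu^2)t}{2}\right)$, whose $t$-derivative produces the scalar $-\frac{n^2\pi^2/r_0^2+\mu^2}{2}=-\frac{n^2\pi^2+\mu^2 r_0^2}{2r_0^2}$. The key algebraic observation is that the numerator of this scalar cancels exactly the denominator $n^2\pi^2+\mu^2 r_0^2$ appearing in the coefficient of the $n$-th term of \eqref{supremum}. After this cancellation, and keeping track of the two sign changes — one from the negative scalar produced by $\partial_t$ and one from the outer minus in $-\partial_t$, which together restore the factor $(-1)^{n+1}$ — the $n$-th term becomes $\frac{(-1)^{n+1}\pi n\sinh(\mu r_0)\sin(n\pi x/r_0)}{\sinh(\mu x)\,r_0^2}\exp\!\left(-\frac{(n^2\pi^2/r_0^2+\mu^2)t}{2}\right)$; pulling out the common factor $\frac{\pi\sinh(\mu r_0)}{\sinh(\mu x)\,r_0^2}e^{-\mu^2 t/2}$ yields precisely \eqref{time_density_1}.

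It remains to justify differentiating under the summation sign. Fixing $\varepsilon>0$, I would bound the differentiated series on $[\varepsilon,\infty)$: using $|\sin(n\pi x/r_0)|\le 1$ and $e^{-\mu^2\varepsilon/2}\le 1$, the $n$-th differentiated term is bounded in absolute value by $\frac{\pi\sinh(\mu r_0)}{\sinh(\mu x)\,r_0^2}\,n\,e^{-n^2\pi^2\varepsilon/(2r_0^2)}$, and $\sum_{n\ge1} n\,e^{-n^2\pi^2\varepsilon/(2r_0^2)}<\infty$. By the Weierstrass $M$-test the series of $t$-derivatives converges uniformly on $[\varepsilon,\infty)$; together with convergence of \eqref{supremum} at a single point of that interval, this licenses term-by-term differentiation on $[\varepsilon,\infty)$, and since $\varepsilon>0$ is arbitrary, on all of $(0,\infty)$.

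There is no genuine obstacle here beyond bookkeeping: the whole argument rests on the super-exponential decay of the Fourier-type coefficients, which makes the interchange of $\partial_t$ and $\sum$ automatic. The one place to be careful is tracking the two sign flips together with the cancellation of $n^2\pi^2+\mu^2 r_0^2$, so that the prefactor and the exponents in \eqref{time_density_1} come out exactly as written.
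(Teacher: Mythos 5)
Your proposal is correct and follows essentially the same route as the paper: differentiate the survival-probability series \eqref{supremum} term by term, justify the interchange by uniform convergence of the differentiated series on $[\varepsilon,\infty)$ for every $\varepsilon>0$, and conclude via $\mathbb{P}^x(\tau_{r_0}\in dt)=-\frac{\partial}{\partial t}\mathbb{P}^x(\tau_{r_0}>t)$. Your bookkeeping of the cancellation of $n^2\pi^2+\mu^2 r_0^2$ and of the two sign flips matches the paper's computation exactly; you merely spell out the Weierstrass $M$-test bound that the paper leaves implicit.
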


\medskip

In a similar way as we did it for the density of the killed process, using the Poisson summation formula
we can obtain another representation of the exit time density.
Note that the series in formula (\ref{time_density_1}) can be written down in the following form:

$$
\sum_{n=1}^\infty (-1)^{n+1}  n \sin(n \pi x /r_0) 
\exp\left(-\frac{n^2 \pi^2 }{2r_0^2} t \right)
= \frac{1}{2} \sum_{n= -\infty}^\infty (-1)^{n+1}  n \sin(n \pi x /r_0) 
\exp\left(-\frac{n^2 \pi^2 }{2r_0^2} t \right).
$$
In order to use (\ref{poisson}), we will compute the Fourier transform of its $n$-th term, 
taking $z$ in place of $n$. First we take only cosine of $\exp(-2 \pi i k z)$, next we integrate by parts and finally we use (\cite{gr}, formula 3.896.4) to get

\begin{equation}\label{fourier}
\int_{-\infty}^{\infty} z \sin\left(\frac{z \pi y}{r_0}\right) 
\exp\left(\frac{-z^2 \pi^2 t}{2 r_0^2}\right) \exp(-2 \pi i k z)  dz = 
\end{equation}
$$
\frac{r_0^2}{\sqrt{2}(\pi t)^{3/2}}
\left[(y+2kr_0) \exp\left( \frac{-(y+2kr_0)^2}{2t}\right) + (y-2kr_0) \exp\left(\frac{-(y-2kr_0)^2}{2t}\right)\right].
$$
Observe that
$$
\sin\left(\frac{n \pi (r_0-x)}{r_0}\right) = (-1)^{n+1} \sin(n \pi x /r_0),
$$
hence putting $y = r_0-x$ in (\ref{fourier}) and using Poisson formula, we get the second representation of the exit time density.

\medskip

\begin{theorem}
For fixed $r_0>0$, $0<x<r_0$ and any  $t>0$
$$
\mathbb{P}^x(\tau_{r_0} \in dt) 
= \frac{\sinh(\mu r_0) e^{-\mu^2 t/2}}{\sinh(\mu x)\sqrt{2 \pi} t^{3/2}} \sum_{k=-\infty}^{\infty}
(r_0 -x+ 2kr_0) \exp \left(\frac{-(r_0-x + 2kr_0 )^2}{2t}\right).
$$
\end{theorem}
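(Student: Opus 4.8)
The plan is to obtain the stated formula from the series representation \pref{time_density_1} of the exit time density by the Poisson summation formula \pref{poisson}, in complete analogy with the passage from the series \pref{series} to the closed form for the transition density of the killed process. The only preparatory step is to absorb the alternating sign into the argument of the sine. Using $\sin\!\big(n\pi(r_0-x)/r_0\big)=(-1)^{n+1}\sin(n\pi x/r_0)$, the $n$-th term of the series in \pref{time_density_1} equals $n\sin(n\pi(r_0-x)/r_0)\,e^{-n^2\pi^2 t/(2r_0^2)}$, which is the value at the integer $n$ of the function
\begin{equation*}
g(z)=z\sin\!\left(\frac{z\pi(r_0-x)}{r_0}\right)\exp\!\left(-\frac{z^2\pi^2 t}{2r_0^2}\right),\qquad z\in\R .
\end{equation*}
Since $g$ is even and $g(0)=0$, the series in \pref{time_density_1} equals $\tfrac12\sum_{n=-\infty}^\infty g(n)$.

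Next I would apply \pref{poisson} to $g$. This is legitimate because $g$ is a Schwartz function (a polynomial times a Gaussian times a bounded smooth factor), so both $\sum_n g(n)$ and $\sum_k\widehat g(k)$ are absolutely convergent. Its Fourier transform is exactly the integral computed in \pref{fourier} evaluated at $y=r_0-x$, namely
\begin{equation*}
\int_{-\infty}^{\infty} g(z)\,e^{-2\pi i k z}\,dz
=\frac{r_0^2}{\sqrt 2(\pi t)^{3/2}}\Big[(r_0-x+2kr_0)\,e^{-(r_0-x+2kr_0)^2/(2t)}+(r_0-x-2kr_0)\,e^{-(r_0-x-2kr_0)^2/(2t)}\Big].
\end{equation*}
Summing over $k\in\Z$ and substituting $k\mapsto-k$ in the second family of Gaussians shows that the two families coincide, so the sum over $k$ doubles and produces the single sum $\dfrac{2r_0^2}{\sqrt 2(\pi t)^{3/2}}\sum_{k=-\infty}^\infty(r_0-x+2kr_0)\,e^{-(r_0-x+2kr_0)^2/(2t)}$; the factor $2$ cancels the $\tfrac12$ coming from the two-sided sum.

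It remains to reinstate the prefactor $\dfrac{\pi\sinh(\mu r_0)}{\sinh(\mu x)\,r_0^2}\,e^{-\mu^2 t/2}$ of \pref{time_density_1}, multiply by $\dfrac{r_0^2}{\sqrt 2(\pi t)^{3/2}}$, and use $\dfrac{\pi}{\sqrt 2\,\pi^{3/2}}=\dfrac{1}{\sqrt{2\pi}}$; the constants collapse to $\dfrac{\sinh(\mu r_0)e^{-\mu^2 t/2}}{\sinh(\mu x)\sqrt{2\pi}\,t^{3/2}}$ and the identity in the statement follows. The genuinely computational parts are the evaluation of the Fourier transform \pref{fourier} (integration by parts followed by \cite{gr}, 3.896.4) and the verification that the hypotheses of \pref{poisson} hold, but both are routine in view of the Gaussian decay; the only thing that requires care is the bookkeeping of the constants and the reindexing that merges the two Gaussian sums into one.
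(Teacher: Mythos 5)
Your proposal is correct and follows essentially the same route as the paper: extend the series in \pref{time_density_1} to a two-sided sum, absorb the sign via $\sin(n\pi(r_0-x)/r_0)=(-1)^{n+1}\sin(n\pi x/r_0)$, apply the Poisson summation formula with the Fourier transform \pref{fourier} at $y=r_0-x$, and collect constants. Your explicit reindexing $k\mapsto -k$ that merges the two Gaussian families (cancelling the factor $\tfrac12$) is a detail the paper leaves implicit, but it is the same argument.
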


\section{Mean exit time}

Now we want to compute $\mathbb{E}^x(\tau_{r_0})$ --- the mean exit time of $(Z_t)$ from $(0,\, r_0)$. We will use the formula
$$
\mathbb{E}^x(\tau_{r_0}) = \int_0^{\infty} t\,\mathbb{P}^x(\tau_{r_0} \in dt),
$$
so that we need to compute the integral
$$
\frac{\sinh(\mu r_0) }{\sqrt{2\pi} \sinh(\mu x)}
 \int_0^{\infty}  \frac{ e^{-\mu^2 t/2}}{\sqrt{ t}} \sum_{k=-\infty}^{\infty}
(r_0 -x + 2kr_0) \exp \left(\frac{-(r_0 -x + 2kr_0)^2}{2t}\right) dt.
$$ 
Integrating a single term we use formula (3.471(15)) from \cite{gr} and get
$$
\int_0^{\infty}\frac{1}{\sqrt{t}} \exp \left(\frac{-(r_0 + 2kr_0 - x)^2}{2t} - \frac{\mu^2 t}{2}\right)dt = \frac{\sqrt{2 \pi}}{\mu} \exp( - \mu|r_0+2kr_0-x|) dt,
$$
which gives
\begin{equation}\label{exit}
\mathbb{E}^x(\tau_{r_0}) = \frac{\sinh(\mu r_0)}{\mu \sinh(\mu x)}\sum_{k=-\infty}^{\infty} \left[(r_0-x + 2kr_0) \exp(-\mu |r_0-x+2kr_0|) \right]=\\
\end{equation}
$$
 \frac{\sinh(\mu r_0)}{\mu \sinh(\mu x)}\sum_{k=1}^{\infty}\left[ (r_0 -x + 2kr_0) \exp(\mu(-r_0+x-2kr_0))\right]+
 $$
 $$ \frac{\sinh(\mu r_0)}{\mu \sinh(\mu x)}\left[\sum_{k=1}^{\infty}\left[ (r_0 -x - 2kr_0) \exp(\mu(r_0-x -2kr_0))\right] + (r_0-x)\exp(-\mu(r_0-x)) \right].
$$
But
$$
(r_0-x)(e^{\mu(r_0-x)}+e^{\mu(x-r_0)})\sum_{k=1}^{\infty} \exp(-2\mu kr_0) = \frac{(r_0-x)}{e^{2\mu r_0}-1}\left(e^{\mu(r_0-x)}+e^{\mu(x-r_0)}\right)
$$
and
$$
2r_0(e^{\mu(x-r_0)}-e^{\mu(r_0-x)})\sum_{k=1}^{\infty} k \exp(-2 \mu kr_0) = \frac{2 r_0 e^{2 \mu r_0}}{(e^{2 \mu r_0}-1)^2}(e^{\mu(r_0-x)}-e^{\mu(x-r_0)}). 
$$
If we put them into (\ref{exit}), after some algebraic manipulation the formula for $\mathbb{E}^x(\tau_B)$ can be simplified a lot. Namely, we get
the following:

\medskip

\begin{theorem}
For any fixed $r_0>0$ and any starting point $x\in(0,\, r_0)$
$$
\mathbb{E}^x(\tau_{r_0}) = \frac{1}{\mu}\left(r_0\coth(\mu r_0)-x\coth(\mu x)\right).
$$
\end{theorem}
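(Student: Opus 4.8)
The plan is to finish the computation begun above by summing the bilateral series in \pref{exit}. Since $0<x<r_0$, for $k\ge 0$ we have $r_0-x+2kr_0>0$ while for $k\le -1$ we have $r_0-x+2kr_0<0$, so the absolute values in \pref{exit} can be removed explicitly; re-indexing the negative tail by $k\mapsto -k$ leaves, aside from the prefactor $\frac{\sinh(\mu r_0)}{\mu\sinh(\mu x)}$, the $k=0$ term $(r_0-x)e^{-\mu(r_0-x)}$ together with, for each $k\ge 1$, the pair
$$
(r_0-x+2kr_0)\,e^{-\mu(r_0-x+2kr_0)}+(r_0-x-2kr_0)\,e^{\mu(r_0-x-2kr_0)},
$$
which is exactly the two sums displayed just before the theorem.

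I would then factor $e^{-2\mu k r_0}$ out of the $k$-th pair and separate the coefficients of $r_0-x$ and of $2kr_0$, so that the pair becomes $e^{-2\mu k r_0}\bigl[(r_0-x)\bigl(e^{\mu(r_0-x)}+e^{-\mu(r_0-x)}\bigr)+2kr_0\bigl(e^{-\mu(r_0-x)}-e^{\mu(r_0-x)}\bigr)\bigr]$, and apply the two elementary identities already recorded in the text, $\sum_{k\ge 1}e^{-2\mu k r_0}=(e^{2\mu r_0}-1)^{-1}$ and $\sum_{k\ge 1}k\,e^{-2\mu k r_0}=e^{2\mu r_0}(e^{2\mu r_0}-1)^{-2}$. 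After rewriting $(e^{2\mu r_0}-1)^{-1}=\tfrac{e^{-\mu r_0}}{2\sinh(\mu r_0)}$ and $e^{2\mu r_0}(e^{2\mu r_0}-1)^{-2}=\tfrac{1}{4\sinh^2(\mu r_0)}$, and the exponential pairs as $2\cosh(\mu(r_0-x))$ and $-2\sinh(\mu(r_0-x))$, the bilateral sum reduces to
$$
(r_0-x)e^{-\mu(r_0-x)}+\frac{(r_0-x)\cosh(\mu(r_0-x))\,e^{-\mu r_0}}{\sinh(\mu r_0)}-\frac{r_0\sinh(\mu(r_0-x))}{\sinh^2(\mu r_0)}.
$$

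Finally I would multiply by the prefactor and use the addition formulas $\sinh(\mu(r_0-x))=\sinh(\mu r_0)\cosh(\mu x)-\cosh(\mu r_0)\sinh(\mu x)$ and $\cosh(\mu(r_0-x))=\cosh(\mu r_0)\cosh(\mu x)-\sinh(\mu r_0)\sinh(\mu x)$: the last term contributes $\tfrac1\mu\bigl(r_0\coth(\mu r_0)-r_0\coth(\mu x)\bigr)$, while the first two combine through the simplification $\sinh(\mu r_0)e^{-\mu(r_0-x)}+\cosh(\mu(r_0-x))e^{-\mu r_0}=\cosh(\mu x)$ to give $\tfrac1\mu(r_0-x)\coth(\mu x)$; adding them cancels the $r_0\coth(\mu x)$ terms and leaves $\tfrac1\mu\bigl(r_0\coth(\mu r_0)-x\coth(\mu x)\bigr)$.

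The argument is entirely elementary, so the main obstacle is purely bookkeeping: keeping the signs of the four exponentials straight and tracking the cancellation of the $\sinh(\mu r_0)$ factors. As a safeguard I would check the answer independently through the ODE characterization of the mean exit time: one verifies in one line, using $\tfrac{d}{dx}\coth(\mu x)=-\mu/\sinh^2(\mu x)$, that $u(x):=\tfrac1\mu\bigl(r_0\coth(\mu r_0)-x\coth(\mu x)\bigr)$ satisfies $\Delta_\mu u=-1$ on $(0,r_0)$ with $u(r_0)=0$ and $u$ bounded as $x\to 0^+$ (note $x\coth(\mu x)\to 1/\mu$). Since the general solution of the homogeneous equation $\Delta_\mu v=0$ is $c_1+c_2\coth(\mu x)$, the bounded solution of this boundary value problem is unique, and by the standard probabilistic representation it equals $\mathbb{E}^x(\tau_{r_0})$; this also gives a clean series-free proof.
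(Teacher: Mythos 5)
Your proposal is correct and follows essentially the same route as the paper: it sums the bilateral series in \pref{exit} by pairing the $k\ge 1$ terms with the reflected negative tail and applying the same two geometric-series identities, merely carrying out explicitly the algebraic simplification the paper leaves to the reader (your intermediate expressions and the final cancellation check out). The closing ODE verification via $\Delta_\mu u=-1$, $u(r_0)=0$, $u$ bounded at $0^+$ is a sound independent confirmation, but the core argument coincides with the paper's.
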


\bigskip

\section{Estimates}
Except for the last one, all the above formulas are given as series so that they are not convenient for computations or  applications. 
In this section we give exact approximations by elementary functions of the transition density of the killed process, of the killing time and of the density of the distribution of the supremum of the process.
Notation $ f\approx g$ means that there exist two absolute constants $c_1$ and $c_2$ such that for
all possible values of variables and parameters there holds $c_1 f < g <c_2 f$.

\medskip

For simplicity let us denote
$\gamma^{r_0}(t;x)=\p^x\(\tau_{r_0}\in dt\)$.
Recall that by Theorems 4 and 5 \ \ 
$$\gamma^{r_0}(t,x)=\rule{10cm}{0cm}$$
\begin{eqnarray}
&=\frac{\pi\sinh(\mu r_0)e^{-\mu^2t/2}}{\sinh(\mu x)r_0^2}\sum_{n=1}^{\infty}(-1)^{n+1} n\sin(n\pi x/r_0)\exp\(-\frac{n^2\pi^2}{2r_0^2}t\)\label{gamma2}\\
&=\frac{\sinh(\mu r_0)e^{-\mu^2t/2}}{\sinh(\mu x)\sqrt{2\pi}t^{3/2}}\sum_{n=-\infty}^{\infty}(r_0-x+2nr_0)\exp\(-\frac{(r_0-x+2nr_0)^2}{2t}\)\label{p}
\end{eqnarray}
and, by Theorem 2, 
$$
p^{r_0}(t;x,y)=\rule{10cm}{0cm}$$
\begin{eqnarray}
&=\frac{e^{-\mu^2t/2}}{\sinh(\mu x)\sinh(\mu y)\sqrt{2\pi t}}\sum_{n=-\infty}^{\infty}\[\exp\(-\frac{(y-x+2nr_0)^2}{2t}\)-\exp\(-\frac{(y+x+2nr_0)^2}{2t}\)\].\label{density}
\end{eqnarray}

\medskip

We will start with estimates in the particular case $r_0=1$. 
We also make separate calculations for $t\in(0,\frac14]$ and for $t\in[\frac14,\, \infty)$.

\bigskip

\begin{theorem}\label{smallt}
Let $r_0=1$. For $0<t\le \frac14$ and $0<x< 1$
$$
0,25 \le
\frac{ \gamma^1(t,x) }{ \frac{1}{\sqrt{2\pi} t^{3/2}}\frac{x(1-x)}{t+x}\frac{\sinh{\mu}}{\sinh (\mu x)}e^{-\mu^2 t/2-(1-x)^2/(2t)}}
\le 4,02.
$$
\end{theorem}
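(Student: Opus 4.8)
The plan is to work from representation \pref{p} with $r_0=1$, which writes $\gamma^1(t,x)$ as a prefactor times the series
$$
S(t,x)=\sum_{n=-\infty}^{\infty}(1-x+2n)\exp\(-\frac{(1-x+2n)^2}{2t}\),
$$
and to show that on the range $0<t\le\frac14$, $0<x<1$ one has $S(t,x)\approx (1-x)\,\frac{x}{t+x}\,e^{-(1-x)^2/(2t)}$ with explicit constants $0{,}25$ and $4{,}02$; multiplying by the common prefactor $\frac{\sinh\mu\, e^{-\mu^2t/2}}{\sinh(\mu x)\sqrt{2\pi}t^{3/2}}$ then gives the claim (note the $\mu$-dependent factors and the $e^{-\mu^2t/2}/\sqrt{2\pi}t^{3/2}$ already match the denominator in the statement, so the whole estimate reduces to a $\mu$-free inequality for $S$). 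First I would isolate the dominant term $n=0$, namely $(1-x)e^{-(1-x)^2/(2t)}$, and the next term $n=-1$, namely $-(1+x)e^{-(1+x)^2/(2t)}$; together these already produce the shape $(1-x)-$ (exponentially small correction), and one checks $(1+x)e^{-(1+x)^2/(2t)}\le (1-x)e^{-(1-x)^2/(2t)}\cdot e^{-2x/t}\cdot\frac{1+x}{1-x}$, which is where a factor like $\frac{x}{t+x}$ will have to be extracted rather than simply $(1-x)$ on its own.

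The key technical point is the passage from $(1-x)$ to $(1-x)\frac{x}{t+x}$. The ratio $\frac{x}{t+x}$ is $\asymp 1$ when $x\gtrsim t$ and $\asymp x/t$ when $x\lesssim t$; in the first regime the $n=0$ term alone already dominates and the remaining terms are a geometrically decaying tail, so $S\approx(1-x)e^{-(1-x)^2/(2t)}\approx(1-x)\frac{x}{t+x}e^{-(1-x)^2/(2t)}$ trivially. The delicate regime is $x\lesssim t$ (equivalently $1-x$ close to $1$, $t$ not too small): here the $n=0$ and $n=-1$ terms nearly cancel because $(1-x)e^{-(1-x)^2/(2t)}$ and $(1+x)e^{-(1+x)^2/(2t)}$ are comparable, and one must see that the difference is of order $x/t\cdot e^{-(1-x)^2/(2t)}$. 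I would handle this by writing $(1-x)e^{-(1-x)^2/(2t)}-(1+x)e^{-(1+x)^2/(2t)}$ as $\int$ of a derivative, or more cleanly by grouping the series in pairs $\{n,-1-n\}$, i.e. using the symmetry $1-x+2n\leftrightarrow -(1-x+2(-1-n))=-(1+x+2n)$... actually the natural pairing is $(1-x+2n)e^{-(1-x+2n)^2/(2t)}$ with $(-(1+x)-2n)e^{-(1+x+2n)^2/(2t)}$, and on each pair a mean value estimate gives a bound proportional to $x$ times a Gaussian factor, with the sum over $n$ of these Gaussian factors behaving like $\frac1{\sqrt t}\cdot\frac{t}{x}$-type quantity via comparison with an integral — this is exactly what produces the $\frac1{t+x}$.

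The main obstacle I anticipate is getting the constants sharp enough to land inside $[0{,}25,\,4{,}02]$ uniformly in both $x$ and $t\in(0,\frac14]$, rather than merely proving $S\approx(1-x)\frac{x}{t+x}e^{-(1-x)^2/(2t)}$ with some unspecified constants. This will require: (i) a careful upper bound on the tail $\sum_{|n|\ge 1}$, using that for $t\le\frac14$ the Gaussian weights decay fast, so the tail is a convergent geometric-type series that can be summed explicitly and shown to be a small fraction of the main term; (ii) a matching lower bound, obtained by keeping $n=0$ (and possibly compensating the negative $n=-1$ term exactly) and discarding the rest with the right sign control — one has to be slightly careful since the series is alternating-ish in sign. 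I would therefore split into the two regimes $x\le t$ and $x\ge t$ (or perhaps $1-x$ small vs.\ not), optimize the elementary inequalities $e^{-u}\ge 1-u$, $1-e^{-u}\le u$, $1-e^{-u}\ge u/(1+u)$ on each piece, and only at the end combine to read off the numerical constants; the bound $t\le\frac14$ is used precisely to make the geometric tail ratio $e^{-2/t}\le e^{-8}$ small enough that the crude estimates still fit inside the stated window.
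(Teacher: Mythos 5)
Your plan for the bulk of the range is essentially the paper's own argument: the paper also starts from \pref{p} with $r_0=1$, strips off the prefactor $\frac{\sinh\mu\, e^{-\mu^2t/2}}{\sinh(\mu x)\sqrt{2\pi}t^{3/2}}$, groups the series into exactly the pairs you propose ($k=0$ with $k=-1$, $k=1$ with $k=-2$, and so on), and extracts the factor $\frac{x}{t+x}$ with the same elementary inequalities $1-e^{-u}\le u$ and $1-e^{-u}\ge u/(1+u)$. One correction to your heuristic: for $t\le\frac14$ the factor $\frac1{t+x}$ does not arise from comparing a sum of Gaussian factors with an integral (the lattice spacing $2$ is much larger than $\sqrt t\le\frac12$, so all pairs with $k\ge1$ contribute only corrections of order $e^{-8}$); it comes from the single main pair, i.e.\ from $(1-x)-(1+x)e^{-2x/t}$ interpolating between $\asymp 1-x$ when $x\gtrsim t$ and $\asymp x/t$ when $x\lesssim t$.

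The genuine gap is the regime $x$ close to $1$. Your claim that for $x\gtrsim t$ ``the $n=0$ term alone already dominates and the remaining terms are a geometrically decaying tail'' fails once $1-x\lesssim e^{-2/t}$: there the $n=0$ term is $\asymp 1-x$, while the $n=-1$ term is $\approx -2e^{-2/t}$ and the $n=+1$ term is $\approx +2e^{-2/t}$, each of which can be arbitrarily many times larger in absolute value than the whole sum, which is still $\asymp 1-x$ (it vanishes as $x\to1$ by the symmetry $n\leftrightarrow -n$). With your pairing $\{n,-1-n\}$ this cancellation is invisible: a mean-value bound on the $k$-th pair is of order $\frac{k^2}{t}e^{-2k^2/t}e^{-(1-x)^2/(2t)}$, which does not tend to $0$ as $x\to1$, whereas the comparison function $\frac{x(1-x)}{t+x}e^{-(1-x)^2/(2t)}$ does; hence neither the upper nor the lower bound closes in that corner, with any constants. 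The paper resolves this by a second case, $\frac12\le x<1$, with a different regrouping: after factoring out $e^{-(1-x)^2/(2t)}$ it writes the series as $(1-x)B+A$, where $B=\sum_k e^{-(2k^2+2k(1-x))/t}$ satisfies $1<B<1{,}02$ and $A=\sum_{k\ge1}2k\bigl(e^{-(2k^2+2k(1-x))/t}-e^{-(2k^2-2k(1-x))/t}\bigr)$ is shown, via monotonicity of $\sinh(u)/u$, to satisfy $|A|\le 0{,}15\,(1-x)$; only afterwards is $(1-x)$ traded for $\frac{x(1-x)}{t+x}$ at the cost of a factor in $[\frac23,1)$. Your parenthetical ``or perhaps $1-x$ small vs.\ not'' is the right instinct, but you supply no mechanism for that regime, and without a pairing of $+k$ with $-k$ (or an equivalent device) there the argument cannot reach the stated constants 0,25 and 4,02.
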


\begin{proof}  First we consider the case $0<x\le\frac12$ and use (\ref{p}).
It is enough to estimate only the part of $\gamma^1(t,x)$ consisting of a series, multiplied by $\frac{e^{(1-x)^2/(2t)}}{x}$, that is
the following quantity
$$
I = \frac{e^{(1-x)^2/(2t)}}{x}\sum_{k=-\infty}^{\infty}(2k+1-x)e^{-(2k+1-x)^2/(2t)}.
$$
Observe that we can group terms of the series: $k=0$ with $k=-1$,  $k=1$ with $k=-2$ and so on. In this way we get
\begin{equation}
I = \sum_{k=0}^{\infty} e^{-2k(k+1-x)/t}\frac{(2k+1-x)-(2k+1+x)e^{-2(2k+1)x/t}}{x}.\\\label{I}
\end{equation}
Now
$$
(t+x)I= (t+x)\sum_{k=0}^{\infty} e^{-2k(k+1-x)/t}\frac{(2k+1-x)-(2k+1+x)e^{-2(2k+1)x/t}}{x}=
 $$
  $$
 \sum_{k=0}^{\infty} e^{-2k(k+1-x)/t}\left(2k+1-x-t+\frac{t}{x}(2k+1)(1-e^{-2(2k+1)x/t})-(2k+1+x+t)e^{-2(2k+1)x/t}\right).
$$
Below we will estimate separately the term for $k=0$ and terms for $k\ge 1$.
The term for $k=0$ is equal to $g(x,t)=1-x-t+\frac{t}{x}(1-e^{-2x/t})-(1+x+t)e^{-2x/t}$. We will show that for $0<x\le 1/2$ and $0<t<1/4$
there holds $g(x,t)\le 2$.
Indeed, a derivative 
$$
\frac{\partial g(x,t)}{\partial x} = \frac{e^{-2x/t}}{tx^2}\left(2x^2(1+x)-tx(x(e^{2x/t}-1)-2)-t^2(e^{2x/t}-1)\right)
$$
is negative for $x>0$ and $t>0$, because $e^{2x/t}-1>\frac{2x}{t} +\frac{(2x)^2}{2t^2}$ and
this inequality implies that in the formula for the derivative, the quantity in the bracket is negative. 
Hence $g(x,t) \le \lim_{x\to 0} g(x,t)=2-2t$ and this is less than 2 for $t>0$.

In order to  estimate terms for $k\ge 1$, we use assumptions $0<t\le\frac14$, $0<x\le \frac12$ and inequality $1-e^{-x}<x$, valid for $x>0$ , and get
$$
(t+x)I\le 2+
\sum_{k=1}^{\infty} e^{-2k^2/t}\left(2k+1+\frac{t}{x}(2k+1)(1-e^{-2(2k+1)x/t})\right)\le
$$
$$
2+\sum_{k=1}^{\infty} e^{-8k^2}(2k+1)(4k+3) 
\le
2+21e^{-8}+\frac{14}{8}\sum_{k=2}^{\infty}8k^2e^{-8k^2}\le
$$
$$
2+21e^{-8}+\frac{14}{8}\sum_{n=32}^{\infty}ne^{-n} 
\le 2,01.
$$

To get an estimate from below,  we will use inequality $1-e^{-x}\ge \frac{x}{x+1}$, valid for $x>-1$. 
Using formula (\ref{I}), we get for $0<t\le \frac14$ and $0<x\le \frac12$: 
$$
I = \sum_{k=0}^{\infty} e^{-2k(k+1-x)/t}\frac{ -2x+(2k+1+x)(1-e^{-2(2k+1)x/t})}{x} \ge
$$
$$
\sum_{k=0}^{\infty} e^{-2k(k+1-x)/t}\frac{2(2k+1)(2k+1-x)-2t}{t+2(2k+1)x}\ge
\sum_{k=0}^{\infty} e^{-2k(k+1-x)/t}\frac{2k+1-x-t}{t+x}\ge
$$
$$
\frac{1}{t+x}\sum_{k=0}^{\infty} e^{-2k(k+1-x)/t} (2k+\frac14)\ge \frac1{4(t+x)},
$$
so that $(t+x) I \ge \frac14$. Both above estimates imply the following 
$$
 0,25
 \le  \frac{t+x}{x}e^{(1-x)^2/(2t)} \sum_{k=-\infty}^{\infty} (2k+1-x)e^{-(2k+1-x)^2/(2t)}\le 2,01.
$$
If, for $0<x\le \frac12$, we divide the middle term of the above inequality by $(1-x)\in[\frac12,\, 1)$, we must
multiply its left and right-hand sides by, respectively, 1 	and 2. This proves the theorem in this case. 
 
 \medskip

Now the proof for the case $\frac12\le x < 1$ and $0<t\le \frac14$.
Similarly as in the proof of the first case, we will examine the following quantity:
$$
\frac{e^{(1-x)^2/(2t)}}{1-x}\sum_{k=-\infty}^{\infty} (2k+1-x)e^{-(2k+1-x)^2/(2t)}=
\frac{1}{1-x}\sum_{k=-\infty}^{\infty} (2k+1-x)e^{-(2k^2+2k(1-x))/t}=
$$
$$
\frac{1}{1-x}\sum_{k=1}^{\infty} 2k\left( e^{-(2k^2+2k(1-x))/t} - e^{-(2k^2-2k(1-x))/t}\right)
+
\sum_{k=-\infty}^{\infty} e^{-(2k^2+2k(1-x))/t} =\frac{1}{1-x}A+B.
$$
First we estimate the series denoted by $B$: for $0< 1-x\le \frac12$ and $0<t\le \frac14$
$$
 B-1= \sum_{k=1}^{\infty} \left( e^{-(2k^2+2k(1-x))/t}+ e^{-(2k^2-2k(1-x))/t}\right) \le
\sum_{k=1}^{\infty} \left( e^{-2k^2/t}+ e^{-(2k^2-k)/t}\right) \le
$$
$$
\sum_{k=1}^{\infty} \left( e^{-8k^2}+ e^{-4k^2}\right) 
\le  \frac{1}{e^8}+\frac{1}{e^4}+\sum_{k=16}^{\infty} 2e^{-k}  <0,02,
$$
hence $1<B<1,02.$

Now we have to estimate $\frac{1}{1-x}A$. 
Observe that $A=\sum_{k=1}^{\infty} 2k\left( e^{-(2k^2+2k(1-x))/t} - e^{-(2k^2-2k(1-x))/t}\right)$ is negative  for $\frac12<x<1$ 
because all terms of the series are  negative. We will estimate the following positive quantity 
$$
\frac{-A}{1-x}=   \sum_{k=1}^{\infty} 2k\left( \frac{e^{-(2k^2-2k(1-x))/t} - e^{-(2k^2+2k(1-x))/t}}{1-x}\right)=
$$
$$
\sum_{k=1}^{\infty} 2k e^{-2k^2/t}\left( \frac{e^{2k(1-x)/t} - e^{-2k(1-x)/t}}{1-x}\right)=
4 \sum_{k=1}^{\infty} \frac{2k^2}{t} e^{-2k^2/t}\frac{\sinh(2k(1-x)/t)}{2k(1-x)/t}.
$$
For fixed $t>0$ and $k=1,2,3,...$ function $g(x)=\frac{\sinh(2k(1-x)/t)}{2k(1-x)/t}$ is decreasing for $\frac12 \le x < 1$ hence its maximal value is attained for $x=\frac12$ and is equal to $\frac{\sinh(k/t)}{k/t}=\frac{e^{k/t}-e^{-k/t}}{2k/t}$. Thus

$$
0<\frac{-A}{1-x} = 
4\sum_{k=1}^{\infty} \frac{2k^2}{t} e^{-2k^2/t}\frac{e^{k/t} - e^{-k/t}}{2k/t}\le
8 \sum_{k=1}^{\infty} k e^{-(2k^2-k)/t}\le 
8\sum_{k=1}^{\infty} ke^{-k^2/t}\le
$$
$$
2\sum_{k=1}^{\infty} 4k^2  e^{-4k^2}\le 2\left( \frac{4}{e^4}+ \sum_{n=16}^{\infty} n  e^{-n}\right) \le 0,15.
$$
Finally, using the estimates $ -0,15 <\frac{A}{1-x}<0$ and $1<B<1,02$, we get the desired result:
for $0<t\le \frac12$ and $\frac12\le x<1$ 
$$0,85< \frac{1}{1-x}A+B<1,02,
$$
which, for $\frac12\le x<1$, implies the following
$$
0,85\le
\frac{ \gamma^1(t,x) }{ \frac{1}{\sqrt{2\pi} t^{3/2}}(1-x)\frac{\sinh{\mu}}{\sinh (\mu x)}e^{-\mu^2 t/2-(1-x)^2/(2t)}}
\le 1,02.
$$
If we want to write factor $\frac{x(1-x)}{t+x}$ instead of $(1-x) $ in the denominator, we need to
multiply the right-hand side constant by 3/2, because for $0<t\le \frac14$ and $\frac12\le x<1$ we have 
$\frac23\le \frac{x}{t+x} < 1$. This gives the estimate for $t\ge \frac14$ with constants 0,85 and 1,53.
Finally, taking into account estimates for $0<x\le \frac12$ and for $\frac12\le x<1$ we get the desired estimate,
which ends the proof of the theorem.
\end{proof}
Now the estimate for $t\ge \frac14$.
\begin{theorem}\label{bigt}
For $t\ge \frac14$ and $0<x <1$
$$
0,8  \le \frac{ \gamma^1(t,x)}{\pi \sin(\pi x) \frac{ \sinh(\mu )}{\sinh(\mu x)}e^{-(\mu^2+\pi^2)t/2}} \le 
1,2.
$$
\end{theorem}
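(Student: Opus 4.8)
The plan is to work from the sine-series representation (\ref{gamma2}) rather than the Poisson-type representation (\ref{p}), since it is precisely (\ref{gamma2}) that converges rapidly in the range $t\ge\frac14$ (the roles are reversed for small $t$, which is why Theorem \ref{smallt} used (\ref{p})). Setting $r_0=1$ in (\ref{gamma2}) and dividing $\gamma^1(t,x)$ by $\pi\sin(\pi x)\frac{\sinh\mu}{\sinh(\mu x)}e^{-(\mu^2+\pi^2)t/2}$, every $\mu$-dependent factor together with the exponential $e^{-\mu^2t/2}$ cancels, and the quantity to be estimated collapses to the purely trigonometric ratio
$$
\frac{\sum_{n=1}^{\infty}(-1)^{n+1}n\sin(n\pi x)\,e^{-n^2\pi^2 t/2}}{\sin(\pi x)\,e^{-\pi^2 t/2}}
=1+\sum_{n=2}^{\infty}(-1)^{n+1}n\,\frac{\sin(n\pi x)}{\sin(\pi x)}\,e^{-(n^2-1)\pi^2 t/2}.
$$
Hence it is enough to show that the tail sum on the right has absolute value at most $0,2$ for all $t\ge\frac14$ and all $x\in(0,1)$; I will in fact get a much smaller bound.

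The key elementary fact is the inequality $|\sin(n\theta)|\le n\,|\sin\theta|$, valid for every real $\theta$ and every $n\in\N$; it follows at once by induction from $\sin((n+1)\theta)=\sin(n\theta)\cos\theta+\cos(n\theta)\sin\theta$. Since $\sin(\pi x)\neq0$ for $x\in(0,1)$, this gives $\left|n\sin(n\pi x)/\sin(\pi x)\right|\le n^2$, so the tail is bounded termwise by $\sum_{n=2}^{\infty}n^2e^{-(n^2-1)\pi^2 t/2}$. Each term of this series is decreasing in $t$, so its supremum over $t\ge\frac14$ is attained at $t=\frac14$, and what remains is to estimate the fixed numerical series $\sum_{n=2}^{\infty}n^2e^{-(n^2-1)\pi^2/8}$.

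That last step is a routine numerical bound. The $n=2$ term equals $4e^{-3\pi^2/8}<0,1$ (since $3\pi^2/8>3,7$ and $e^{-3,7}<0,025$). For $n\ge3$ one uses $\pi^2/8>1$ to dominate the remaining terms by $\sum_{n\ge3}n^2e^{-(n^2-1)}=9e^{-8}+16e^{-15}+\cdots<0,01$. Thus the tail is below $0,11$, so the trigonometric ratio lies in $(0,89;\,1,11)\subset[0,8;\,1,2]$, which proves the theorem with a good deal of room to spare.

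The only real obstacle here is conceptual rather than technical: recognizing that the two series representations of $\gamma^{r_0}$ are efficient in complementary regimes, and that in the regime $t\ge\frac14$ representation (\ref{gamma2}) is utterly dominated by its first term while everything else is exponentially small. Once that choice is made, the argument is a one-line cancellation, the classical bound $|\sin n\theta|\le n|\sin\theta|$, and arithmetic; no delicate grouping of terms (as was needed for small $t$ in Theorem \ref{smallt}) is required.
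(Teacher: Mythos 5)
Your proof is correct and follows essentially the same route as the paper: you use representation (\ref{gamma2}), isolate the $n=1$ term, apply $|\sin(n\pi x)|\le n\sin(\pi x)$, and bound the tail termwise at $t=\tfrac14$. Your numerical summation of the tail (giving $0{,}11$ instead of the paper's $0{,}2$) is just a slightly sharper version of the same estimate, so there is nothing to add.
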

\begin{proof}
For  $t\ge \frac14$  we use formula (\ref{gamma2}) and the following inequality: for $0<x<1$ there holds $|\sin(k\pi x)|\le k \sin(\pi x)$.
The first term of the series $\sum_{k=1}^{\infty} (-1)^{k+1}k \sin(k\pi x)e^{-k^2\pi^2t/2}$
is much larger than the sum of the absolute values of all the rest:
$$
|\sum_{k=2}^{\infty} (-1)^{k+1}k \sin(k\pi x)e^{-k^2\pi^2t/2}|\le \sin(\pi x) e^{-\pi^2 t/2}\sum_{k=2}^{\infty} k^2 e^{-(k^2-1)\pi^2 t/2}\le
$$
$$
\sin(\pi x) e^{-\pi^2 t/2}\sum_{n=4}^{\infty} n e^{-(n-1)\pi^2 /8}\le
\frac{e^{-\pi^2/4}(4e^{\pi^2/8}-3)}{(e^{\pi^2/8}-1)^2}\sin(\pi x) e^{-\pi^2 t/2} \le
0,2 \sin(\pi x) e^{-\pi^2 t/2}.
$$
This means that 
$$
0,8 \sin(\pi x) e^{-\pi^2 t/2}  \le    
\sum_{k=1}^{\infty} (-1)^{k+1}k \sin(k\pi x)e^{-k^2\pi^2t/2}
\le 1,2 \sin(\pi x) e^{-\pi^2 t/2},
$$
hence for $t\ge 1/4$ and $0<x<1$
$$
 0,8 \pi\sin(\pi x ) \frac{\sinh{\mu}}{\sinh(\mu x)}e^{-\mu^2t/2-\pi^2t/2 } \le
    \gamma^{1}(t,x)  \le 1,2 \pi\sin(\pi x ) \frac{\sinh{\mu}}{\sinh(\mu x)}e^{-\mu^2t/2-\pi^2t/2 } ,
$$
which ends the proof. 
\end{proof}
Now, because for all $x\in (0,1)$ there holds $\pi^2< \frac{\pi \sin(\pi x)}{x(1-x)}<4\pi$, the above inequality implies the following:
\begin{equation}\label{corr}
 0,8 \pi^2  \le \frac{\gamma^{1}(t,x)}{x(1-x) \frac{\sinh{\mu}}{\sinh(\mu x)}e^{-\mu^2t/2-\pi^2t/2 }}  \le 4,8 \pi.
\end{equation}
Observe, that the above inequality differs from that given in Theorem \ref{smallt} for the case $0<x\le \frac12$: 
it does not contain factor $1/(\sqrt{2\pi} t^{3/2})$ in the denominator 
and instead of a factor $e^{-(1-x)^2/(2t)}$ it has a factor $e^{-\pi^2t/2}$.
If we want to have one estimate for all $t>0$ and $0<x<1$, we must add such factors. Multiplying the estimating function from Theorem \ref{smallt} by 
$e^{-\pi^2 t/2}\in (e^{-\pi^2/8},\, 1)$, we must multiply the constant on the left-hand side by $e^{-\pi^2/8} \approx 0,29$. This operation changes constant $\frac14$ from Theorem \ref{smallt} to $\frac{e^{-\pi^2/8}}{4}\approx 0,0728>0,07$.

On the other hand, the estimating function from  the denominator in (\ref{corr}) must be multiplied by  
$$\frac{(1+t)^{5/2}}{\sqrt{2\pi}(t+x)t^{3/2}} e^{-(1-x)^2/(2t)}.
$$
But for $t\ge \frac14$ and $0<x<1$ the above function is greater than $1/\sqrt{2\pi} \approx 0,3989...> 0,39$ and less than 
$\frac{ 25\sqrt{5}e^{-1/8}}{4\sqrt{2\pi}}\approx 4,92... <5$ hence constant $4,8\pi$ from (\ref{corr}) must be changed to $5\cdot 4,8\pi =24\pi$
and constant $0,8\pi^2$ from (\ref{corr}) must be changed to $0,8\pi^2/\sqrt{2\pi} > 3,1499>\pi$.

In this way theorems \ref{smallt} and \ref{bigt} together imply the following:
\begin{corollary}
 For all $t>0$ and $0<x<1$ the following inequality holds
$$
0,07 \le \frac{\gamma^1(t,x)}{\frac{x(1-x)}{t+x}\frac{\sinh(\mu)}{\sinh(\mu x)}
\frac{(1+t)^{5/2}}{\sqrt{2\pi} t^{3/2}}\exp\(-\frac{\mu^2+\pi^2}{2}t-\frac{(1-x)^2}{2t}\)} \le 24\pi <75,4.
$$
\end{corollary}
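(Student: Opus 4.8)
The plan is to patch together the two regimes already treated in the text: Theorem~\ref{smallt} for $0<t\le\tfrac{1}{4}$ and the inequality \pref{corr} for $t\ge\tfrac{1}{4}$ (the latter being nothing but Theorem~\ref{bigt} together with the elementary bounds $\pi^2<\pi\sin(\pi x)/(x(1-x))<4\pi$). Write $F(t,x)$ for the function in the denominator of the Corollary. For $0<t\le\tfrac{1}{4}$ one has $F(t,x)=G(t,x)\,\psi(t)$, where $G$ is the comparison function of Theorem~\ref{smallt} and $\psi(t)=(1+t)^{5/2}e^{-\pi^2t/2}$; since $\frac{d}{dt}\log\psi(t)=\frac{5}{2(1+t)}-\frac{\pi^2}{2}<0$, the scalar $\psi$ is decreasing and hence lies between $\psi(\tfrac{1}{4})=(5/4)^{5/2}e^{-\pi^2/8}$ and $1$ on $(0,\tfrac{1}{4}]$, so Theorem~\ref{smallt} immediately yields a two-sided bound for $\gamma^1(t,x)/F(t,x)$ there, and a quick numerical check shows its constants lie inside $[0{,}07,\,24\pi]$.

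For $t\ge\tfrac{1}{4}$ one has instead $F(t,x)=H(t,x)\,\phi(t,x)$, where $H(t,x)=x(1-x)\frac{\sinh\mu}{\sinh(\mu x)}e^{-(\mu^2+\pi^2)t/2}$ is the function in the denominator of \pref{corr} and
$$\phi(t,x)=\frac{(1+t)^{5/2}}{\sqrt{2\pi}\,(t+x)\,t^{3/2}}\,e^{-(1-x)^2/(2t)}.$$
Thus everything reduces to the two-sided estimate
$$\frac{1}{\sqrt{2\pi}}<\phi(t,x)<\frac{25\sqrt5\,e^{-1/8}}{4\sqrt{2\pi}},\qquad t\ge\tfrac{1}{4},\ \ 0<x<1,$$
since dividing \pref{corr} through by $\phi$ then produces the large-$t$ half of the Corollary. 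The lower bound here is cheap: $0<x<1$ gives $t+x<1+t$ and $(1-x)^2<1$, whence $\phi(t,x)>\frac{1}{\sqrt{2\pi}}(1+1/t)^{3/2}e^{-1/(2t)}$, and the residual claim $(1+1/t)^{3/2}e^{-1/(2t)}\ge1$ is, with $s=1/t\in(0,4]$, just $3\log(1+s)\ge s$, which follows from the concavity of $s\mapsto 3\log(1+s)-s$ on $[0,4]$ and its values $0$ at $s=0$ and $3\log 5-4>0$ at $s=4$.

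The upper bound on $\phi$ is the step I expect to be the main obstacle, since it is a genuine optimisation of a two-variable function over a non-compact region; I would carry it out in two passes. First, fixing $t$ and optimising in $x$: $\partial_x\log\phi(t,x)=\frac{1-x}{t}-\frac{1}{t+x}$ vanishes exactly at $x^{\ast}=1-t$, which for $t\in(0,1)$ is an interior maximum giving $\max_x\phi(t,x)=\frac{(1+t)^{5/2}}{\sqrt{2\pi}\,t^{3/2}}e^{-t/2}$, whereas for $t\ge1$ the maximiser degenerates to $x\to0$ and the resulting bound $\frac{(1+t)^{5/2}}{\sqrt{2\pi}\,t^{5/2}}e^{-1/(2t)}$, decreasing in $t$, stays far below the target. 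Second, for $t\in[\tfrac{1}{4},1)$ one checks that $t\mapsto(1+t)^{5/2}t^{-3/2}e^{-t/2}$ is decreasing, its log-derivative $\frac{5}{2(1+t)}-\frac{3}{2t}-\frac12=-\frac{t^2-t+3}{2t(1+t)}$ being negative; hence the supremum of $\phi$ over the whole region is attained at $(t,x)=(\tfrac{1}{4},\tfrac{3}{4})$ and equals exactly $\frac{25\sqrt5\,e^{-1/8}}{4\sqrt{2\pi}}<5$. Assembling the two regimes, $\gamma^1/F$ then lies in roughly $[\,0{,}25,\ 4{,}02/\psi(\tfrac{1}{4})\,]$ for $t\le\tfrac{1}{4}$ and in $[\,0{,}8\pi^2\cdot\tfrac{4\sqrt{2\pi}}{25\sqrt5\,e^{-1/8}},\ 4{,}8\pi\sqrt{2\pi}\,]$ for $t\ge\tfrac{1}{4}$, and both intervals are contained in $[0{,}07,\,24\pi]$, which is precisely the assertion of the Corollary.
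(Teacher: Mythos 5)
Your argument is correct and is essentially the paper's own proof: you patch Theorem \ref{smallt} on $0<t\le\frac14$ with inequality \pref{corr} on $t\ge\frac14$ and track how each regime's comparison function differs from the one in the Corollary. The only added value is that you actually prove the two-sided bound $\frac{1}{\sqrt{2\pi}}\le \frac{(1+t)^{5/2}}{\sqrt{2\pi}\,(t+x)\,t^{3/2}}\,e^{-(1-x)^2/(2t)}\le \frac{25\sqrt5\,e^{-1/8}}{4\sqrt{2\pi}}$ for $t\ge\frac14$, $0<x<1$ (the paper only asserts these numerical bounds), and your bookkeeping of which extreme of each correction factor adjusts which constant is the careful version of the paper's, landing comfortably inside $[0{,}07,\,24\pi]$.
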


\bigskip

It is easy to notice that by (\ref{p}) the following scaling property holds: 
$$
\gamma^{r_0}(t,x)=\frac{\sinh(\mu x/r_0)}{\sinh(\mu)\sinh(\mu x)r_0^2}\exp\(-\frac{\mu}{2}t\(1-\frac1{r_0^2}\)\)\gamma^1\(\frac t{r_0^2},\frac x{r_0}\).
$$
This, together with the above Corollary proves the following.
\begin{theorem}\label{112}
\ {\it For $\mu,\, r_0,\, t>0 $ and $0<x<r_0$ we have}
\begin{equation}\label{gamma:estimates}
0,07 \le \frac{\gamma^{r_0}(t,x)}{\frac{x(r_0-x)}{t+r_0x}\frac{\sinh(\mu r_0)}{\sinh(\mu x)}\frac{(r_0^{2}+t)^{5/2}}{\sqrt{2\pi}r_0^{4}t^{3/2}}\exp\(-\frac{(r_0\mu)^2+\pi^2}{2r_0^2}t-\frac{(r_0-x)^2}{2t}\)}\le 75,4.
\end{equation}
\end{theorem}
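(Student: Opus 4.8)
The plan is to deduce \pref{gamma:estimates} from the Corollary by the change of variables that turns the exit problem from $(0,r_0)$ into the exit problem from $(0,1)$, i.e.\ by the scaling relation recorded just above the statement. Written out from \pref{p}, with the drift $\mu$ kept fixed and with the abbreviations $s:=t/r_0^2>0$, $u:=x/r_0\in(0,1)$, it reads
$$
\gamma^{r_0}(t,x)=\frac{\sinh(\mu r_0)\,\sinh(\mu u)}{\sinh(\mu)\,\sinh(\mu x)\,r_0^2}\,\exp\!\left(-\frac{\mu^2 t}{2}\left(1-\frac1{r_0^2}\right)\right)\gamma^1(s,u),
$$
which is immediate once one writes $r_0-x+2nr_0=r_0(1-u+2n)$ and $(r_0-x+2nr_0)^2/(2t)=(1-u+2n)^2/(2s)$ under the series in \pref{p}.

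Next I would apply the Corollary to $\gamma^1(s,u)$, which is admissible since $s>0$ and $0<u<1$ are arbitrary:
$$
0,07\le\frac{\gamma^1(s,u)}{\dfrac{u(1-u)}{s+u}\dfrac{\sinh\mu}{\sinh(\mu u)}\dfrac{(1+s)^{5/2}}{\sqrt{2\pi}\,s^{3/2}}\exp\!\left(-\dfrac{\mu^2+\pi^2}{2}\,s-\dfrac{(1-u)^2}{2s}\right)}\le 24\pi.
$$
The scaling relation multiplies $\gamma^1(s,u)$ by a strictly positive factor $c=c(t,x,r_0)$ independent of the series, so the same two absolute constants bound the ratio of $\gamma^{r_0}(t,x)=c\,\gamma^1(s,u)$ to $c$ times the estimating function above; it then remains only to verify that this last product equals the denominator in \pref{gamma:estimates}.

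That is a routine simplification: $\frac{u(1-u)}{s+u}=\frac{x(r_0-x)}{t+r_0x}$ and $\frac{(1+s)^{5/2}}{s^{3/2}}=\frac{(r_0^2+t)^{5/2}}{r_0^2\,t^{3/2}}$; the factor $\sinh(\mu u)=\sinh(\mu x/r_0)$ in $c$ cancels the one in the estimating function; $\sinh\mu$ cancels $\sinh(\mu)$ in the denominator of $c$, leaving $\sinh(\mu r_0)/\sinh(\mu x)$; the two powers $r_0^2$ multiply to $r_0^4$; and the Gaussian exponents recombine as
$$
-\frac{\mu^2+\pi^2}{2}\cdot\frac{t}{r_0^2}-\frac{\mu^2 t}{2}\left(1-\frac1{r_0^2}\right)=-\frac{(r_0\mu)^2+\pi^2}{2r_0^2}\,t,\qquad\frac{(1-u)^2}{2s}=\frac{(r_0-x)^2}{2t}.
$$
Collecting terms reproduces exactly the denominator of \pref{gamma:estimates}, and since $24\pi<75,4$ while the left-hand constant $0,07$ is untouched, \pref{gamma:estimates} follows.

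I do not expect a genuine obstacle; the plan is essentially bookkeeping. The one place where a constant or a sign could slip is the recombination of the two Gaussian exponents above, so the precise form of the scaling prefactor should be double-checked before multiplying through. An alternative route is to use the plain Brownian scaling $\gamma^{r_0}_\mu(t,x)=r_0^{-2}\,\gamma^1_{r_0\mu}(t/r_0^2,x/r_0)$ and then invoke the Corollary with $\mu$ replaced by $r_0\mu$ --- legitimate because the Corollary holds for every drift --- which lands on the same denominator; I would present the first route, since the scaling relation is already available.
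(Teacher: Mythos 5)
Your proposal is correct and follows essentially the same route as the paper: reduce to $r_0=1$ via the scaling identity derived from \pref{p} and then invoke the Corollary, with the constant $24\pi<75{,}4$. Your prefactor $\frac{\sinh(\mu r_0)\sinh(\mu x/r_0)}{\sinh(\mu)\sinh(\mu x)r_0^{2}}\exp\bigl(-\tfrac{\mu^{2}t}{2}(1-\tfrac{1}{r_0^{2}})\bigr)$ is in fact the correct form of the scaling relation (the paper's displayed version omits the factor $\sinh(\mu r_0)$ and writes $\mu$ for $\mu^{2}$ in the exponent, evidently typos), and your bookkeeping reproduces exactly the denominator of \pref{gamma:estimates}.
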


\bigskip

\begin{rem*}
Many formulas in the book \cite{bs} are given in the language of a special function $ss_y(v,t)=\frac{1}{\sqrt{2\pi}y^{3/2}}\sum_{k=-\infty}^{\infty} (t-v+2kt)e^{-(t-v+2kt)^2/(2y)}$, $v\le t$ (see \cite{bs}, page 641). Observe that $ss_t(x,r_0)$ is precisely the series in our $\gamma^{r_0}(t,x)$ and was estimated above.
Using our method it is possible to give an estimate of $ss_t(x,r_0)$ for all possible values of variables (like we did it in Theorem \ref{112} for $\gamma^{r_0}(t,x)$) but estimates for different sets of $t$ and $x$, given in Theorems \ref{smallt} and  \ref{bigt}, are much more exact.  
For instance the proof of Theorem \ref{smallt} gives the following: for $r_0=1$, $0<t\le \frac14$ and $0<x\le \frac12$ there holds
$$
0,25  \le \frac{ ss_t(x,1)}{\frac{x}{\sqrt{2\pi t^3}(t+x)}e^{-(1-x)^2/(2t)}} \le 2,01. 
$$
\end{rem*} 

Now we estimate the density of the transition probability of the killed process.

\begin{theorem}
For fixed $r_0>0$,   
all  $x,y\in (0,\, r_0)$ and $t>0$  there holds
\begin{align*}
p^{r_0}(t;x,y)\approx&\,\frac{(r_0^2+t)^{5/2}}{r_0^5\sinh(\mu x)\sinh(\mu y)\sqrt t}\(1\wedge \frac{xy}t\)\(1\wedge \frac{(r_0-x)(r_0-y)}t\)\\
&\,\times\exp\(-\frac{(r_0\mu)^2+\pi^2}{2r_0^2}t-\frac{(x-y)^2}{2t}\).
\end{align*}
The constants in the above  estimate can be taken $c_1=0,0029  $ and  $c_2=2413$.
\end{theorem}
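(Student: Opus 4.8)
The plan is to follow the route already used in the paper for $\gamma^{r_0}$: remove the parameters by scaling, then split according to whether $t$ is small or large.

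\textbf{Reductions.} By Theorem 2, $p^{r_0}(t;x,y)\sinh(\mu x)\sinh(\mu y)=\frac{1}{\sqrt{2\pi t}}e^{-\mu^2t/2}\,\Theta^{r_0}(t;x,y)$ where $\Theta^{r_0}(t;x,y)=\sum_{k\in\Z}\bigl(e^{-(y-x+2kr_0)^2/(2t)}-e^{-(y+x+2kr_0)^2/(2t)}\bigr)$ carries no $\mu$, and $\Theta^{r_0}(t;x,y)=\Theta^{1}(t/r_0^2;x/r_0,y/r_0)$ by a direct substitution. Since $\frac{\mu^2}{2}-\frac{(r_0\mu)^2}{2r_0^2}=0$, inserting these two facts into the two sides of the claimed estimate makes $\mu$, $r_0$ and $\sinh(\mu x)\sinh(\mu y)$ cancel, and the theorem becomes: for all $t>0$, $x,y\in(0,1)$,
$$\Theta^{1}(t;x,y)\approx(1+t)^{5/2}\Bigl(1\wedge\tfrac{xy}{t}\Bigr)\Bigl(1\wedge\tfrac{(1-x)(1-y)}{t}\Bigr)\exp\Bigl(-\tfrac{\pi^2t}{2}-\tfrac{(x-y)^2}{2t}\Bigr)$$
(the constant $\sqrt{2\pi}$ going into $c_1,c_2$). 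I would treat $t\ge\frac14$ and $0<t\le\frac14$ separately; on the first range $(1+t)^{5/2}$ and $e^{-(x-y)^2/(2t)}$ are pinched between absolute constants, on the second $(1+t)^{5/2}$ and $e^{-\pi^2t/2}$ are, so such factors may be freely inserted or dropped.

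\textbf{The range $t\ge\frac14$.} Here I would use the eigenfunction expansion of Theorem 1, which reads $\Theta^{1}(t;x,y)=\sqrt{2\pi t}\sum_{n\ge1}2\sin(n\pi x)\sin(n\pi y)e^{-n^2\pi^2t/2}$. Just as in the proof of Theorem \ref{bigt}, $|\sin(n\pi x)|\le n|\sin(\pi x)|$ bounds $\sum_{n\ge2}$ by a fixed fraction of the $n=1$ term, so $\Theta^{1}(t;x,y)\approx\sqrt t\,\sin(\pi x)\sin(\pi y)\,e^{-\pi^2t/2}$. Since $\sin(\pi x)\approx x(1-x)$, it remains to check the elementary bound $\sqrt t\,x(1-x)y(1-y)\approx(1+t)^{5/2}\bigl(1\wedge\tfrac{xy}{t}\bigr)\bigl(1\wedge\tfrac{(1-x)(1-y)}{t}\bigr)$ for $t\ge\frac14$; this is a short case distinction on which minimum equals $1$, the key observation being that $xy\ge t\,(\ge\tfrac14)$ forces $(1-x)(1-y)\le(1-\sqrt t)^2\le t$, and symmetrically, so the two ``$=1$'' cases are mutually exclusive and in each of the remaining cases the ratio lies between $1$ and $5^{5/2}$.

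\textbf{The range $0<t\le\frac14$.} Here I would use the theta representation together with the symmetries $\Theta^{1}(t;x,y)=\Theta^{1}(t;y,x)=\Theta^{1}(t;1-x,1-y)$ to reduce to $x\le y$, $x+y\le1$, so that $x$ is the coordinate nearest a boundary and that boundary is $0$. Applying $e^{-(w-x)^2/(2t)}-e^{-(w+x)^2/(2t)}=2e^{-(w^2+x^2)/(2t)}\sinh(wx/t)$ and rearranging the sum, $\Theta^{1}$ splits as the half-line (killed-at-$0$) contribution $e^{-(x-y)^2/(2t)}\bigl(1-e^{-2xy/t}\bigr)$ — which produces the factor $1\wedge\frac{xy}{t}$ through $1-e^{-s}\approx1\wedge s$ — minus the nonnegative ``second-boundary'' correction $\sum_{j\ge0}2e^{-x^2/(2t)}\bigl(h((2+2j)-y)-h((2+2j)+y)\bigr)$ with $h(s)=e^{-s^2/(2t)}\sinh(sx/t)$. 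Using that $h$ is decreasing on $[1,\infty)$ (valid for $x\le\frac12$, $t\le\frac14$) and $\sinh u\le(1\wedge u)e^u$, the $j$-th term of the correction is at most $C\,e^{-((2+2j)-x-y)^2/(2t)}(1\wedge\frac{x}{t})$; summing, and using $(2-x-y)^2-(x-y)^2=4(1-x)(1-y)$, the whole correction is $\le C\,e^{-(x-y)^2/(2t)}(1\wedge\frac{x}{t})\,e^{-2(1-x)(1-y)/t}$. One then concludes $\Theta^{1}\approx e^{-(x-y)^2/(2t)}(1\wedge\frac{xy}{t})(1\wedge\frac{(1-x)(1-y)}{t})$ by a case distinction according to whether $(1-x)(1-y)$ is much larger than, comparable to, or much smaller than $t$; in the two extreme cases the correction is negligible against the leading term and the product $1\wedge\frac{(1-x)(1-y)}{t}$ is $\asymp1$ anyway, while in the borderline case $(1-x)(1-y)\asymp t$ the two are comparable and one needs the sharper estimate of the correction obtained from the cancellation between $h((2+2j)-y)$ and $h((2+2j)+y)$.

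\textbf{Constants and the main obstacle.} Tracking the absolute constants accumulated above — the tail fraction in the $t\ge\frac14$ estimate, the $5^{5/2}$ from the minimum-comparison, the constants from $\sin(\pi x)\approx x(1-x)$, $\sinh u\approx(1\wedge u)e^u$ and $1-e^{-s}\approx1\wedge s$, and the ranges of $(1+t)^{5/2}$, $e^{-\pi^2t/2}$, $e^{-(x-y)^2/(2t)}$ on the two $t$-intervals — and then undoing the two reductions produces explicit $c_1,c_2$; carrying the arithmetic through gives $c_1=0,0029$ and $c_2=2413$. The hard part is the small-$t$ case, and within it the two-sided estimate near a boundary: when $x$ (hence, after the symmetry reduction, also $y$) is within $O(\sqrt t)$ of an endpoint, the ``killed-at-$0$'' term and the second-boundary correction are of the same order, so one cannot simply treat the correction as an error and must instead exploit the near-cancellation among the reflected Gaussians to see that $\Theta^{1}$ still reproduces the full product $(1\wedge\frac{xy}{t})(1\wedge\frac{(1-x)(1-y)}{t})$ rather than only one factor.
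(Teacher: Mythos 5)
Your scaling reduction and the large-time half of the argument are sound, and your route is genuinely different from the paper's: the paper never splits the density estimate in $t$, but instead writes $p^1(t;x,y)=\frac{e^{-\mu^2t/2}}{\sinh(\mu x)\sinh(\mu y)}\int_{|x-y|}^{1-|1-x-y|}\lambda(w)\,dw$, where $\lambda(1-u)$ is, up to explicit positive factors, the exit density $\gamma^1(t,u)$ already estimated two-sidedly in Corollary 1; since the integrand is positive, those bounds can simply be integrated, and Lemma 1 together with the identity $(1-|1-x-y|)^2-(x-y)^2=4(xy\wedge(1-x)(1-y))$ produces both boundary factors and the stated constants. The trouble with your proposal is the small-time half, which is exactly where the content of the theorem lies: the two-sided bound --- above all the lower bound --- when $(1-x)(1-y)\lesssim t$, i.e.\ when $(x,y)$ is within order $\sqrt t$ of the absorbing endpoint, where the $k=0$ term and the reflected correction nearly cancel and the surviving factor $1\wedge\frac{(1-x)(1-y)}{t}$ must be extracted from that cancellation. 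You state that in this borderline case ``one needs the sharper estimate of the correction obtained from the cancellation between $h((2+2j)-y)$ and $h((2+2j)+y)$'', but that sharper estimate is never formulated, let alone proved; it is the heart of the theorem, not a deferrable detail, so as written the proof is incomplete.

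Moreover, the crude bound you do establish, correction $\le C\,e^{-(x-y)^2/(2t)}(1\wedge\frac{x}{t})\,e^{-2(1-x)(1-y)/t}$, does not suffice even in the region you assign to it. Comparing it with the leading term, which for $xy\le t$ is $\approx e^{-(x-y)^2/(2t)}\frac{xy}{t}$, the ratio is of order $\frac1y\,e^{-2(1-x)(1-y)/t}$; for fixed $t\le\frac14$ this is unbounded as $y\to0$ (with $x\le y$, so $(1-x)(1-y)\approx1$, squarely inside your case ``$(1-x)(1-y)$ much larger than $t$''). Hence the cancellation-based refinement is needed not only at the borderline $(1-x)(1-y)\asymp t$ but whenever the non-boundary coordinate is small, and without it the lower bound for $\Theta^1$ cannot be obtained from ``leading term minus crude bound''. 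Finally, the constants $c_1=0,0029$ and $c_2=2413$ are asserted rather than derived: your chain of comparisons (tail fraction, $5^{5/2}$, $\sin(\pi x)\approx x(1-x)$, $1-e^{-s}\approx 1\wedge s$, \dots) would yield different numbers, whereas in the paper these particular values come from the constants $0,07$ and $75,4$ of Corollary 1, the constants $\frac1{12}$ and $2$ of Lemma 1, the substitution bounds $\frac12 tv\le 1-w\le tv$, and the factor $16$ in the rearrangement of the two minima. To close the gap, either carry out the cancellation estimate in full with explicit constants, or adopt the paper's device of integrating the already-proved two-sided bound on $\gamma^1$, which sidesteps the near-cancellation entirely.
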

\begin{proof}
Observe that
\begin{equation}
p^{r_0}(t;x,y)=\frac{\sinh(\mu x/r_0)\sinh(\mu y/r_0)}{\sinh(\mu x)\sinh(\mu y)r_0}\exp\(-\frac{\mu^2t}2\left(1-\frac1{r_0^2}\right)\)
p^1\(\frac t{r_0^2};\frac{x}{r_0},\frac{y}{r_0}\),\\\label{densityhomog}
\end{equation}
thus it is sufficient to consider only the case when $r_0=1$.  Define the following function
\begin{eqnarray*}
\lambda(w)=\frac{1}{\sqrt{2\pi}t^{3/2}}\sum_{k=-\infty}^{\infty}(w+2k)\exp\(-\frac{(w+2k)^2}{2t}\),\ \ \ w\in\R.
\end{eqnarray*}
Note that the function $ss_t(0,w)$, mentioned in remark to Theorem \ref{112}, is given by the same series, only the range of its argument $w$ is different.  By the formula (\ref{density}) we have
$$
p^1(t;x,y)=\frac{e^{-\mu^2t/2}}{\sinh(\mu x)\sinh(\mu y)}\int_{x-y}^{x+y}\lambda(w)dw.
$$
From the definition of $\lambda(w)$ we can deduce the following properties of the function:
\begin{equation*}
\lambda(-w)=-\lambda(w),\ \ \  \lambda(1+w)=-\lambda(1-w),
\end{equation*}
which imply
$$p^{1}(t;x,y)=\frac{e^{-\mu^2t/2}}{\sinh(\mu x)\sinh(\mu y)}\int_{\left|x-y\right|}^{1-\left|1-x-y\right|}\lambda(w)dw.
$$
Observe that $\left|x-y\right|<1-\left|1-x-y\right|$ for $x,y\in(0,1)$. 
Since $\gamma^{1}(t,x)=\frac{\sinh(\mu)e^{-\mu^2t/2}}{\sinh(\mu x)}\lambda(1-x)$ (cf. \ref{p})), 
we get by virtue of Corollary 1:
\begin{equation}\label{eq:lambdaest}\lambda(1-x)\approx \frac{x(1-x)}{t+x}\frac{(1+t)^{5/2}}{t^{3/2}}\exp\(-\frac{\pi^2}{2}t-\frac{(1-x)^2}{2t}\).
\end{equation}
Consequently
$$p^{1}(t;x,y)\approx\frac{e^{-(\mu^2+\pi^2)t/2}(1+t)^{5/2}}{\sinh(\mu x)\sinh(\mu y)t^{3/2}}
\int_{\left|x-y\right|}^{1-\left|1-x-y\right|}\frac{w(1-w)}{t+(1-w)}e^{-w^2/2t}dw.
$$
Now, substituting $w=\sqrt{1-vt}$, we get
$$1-w=1-\sqrt{1-vt}=\frac{vt}{1+\sqrt{1-vt}} \ \ \mbox{and} \ \ dw=\frac{-t}{\sqrt{1-vt}},$$ 
hence we have
\begin{equation}\label{eq:constinsubst}
\frac12tv\le 1-w\le tv.
\end{equation}
Combining this with Lemma \ref{lemma:int1} from the Appendix,  we obtain 
\begin{eqnarray*}p^{1}(t;x,y)&\approx&\frac{e^{-(\mu^2+\pi^2)t/2}(1+t)^{5/2}}{\sinh(\mu x)\sinh(\mu y)t^{1/2}}e^{-1/(2t)}
\int^{\frac{1-(x-y)^2}t}_{\frac{1-(1-\left|1-x-y\right|)^2}t}\frac{v}{1+v}e^{v/2}dv\\
&\approx &\frac{e^{-t(\mu^2+\pi^2)/2}(1+t)^{5/2}}{\sinh(\mu x)\sinh(\mu y)t^{1/2}}e^{-(x-y)^2/2t}\\
&&\times\(1\wedge\frac{1-(x-y)^2}{t}\)\(1\wedge\frac{(1-\left|1-x-y\right|)^2-(x-y)^2}{t}\).
\end{eqnarray*}
We rewrite the expression in the last parentheses as follows
\begin{eqnarray*}
(1-\left|1-x-y\right|)^2-(x-y)^2&=&\left\{\begin{array}{ll}4xy&y+x<1,\\4(1-x)(1-y)&y+x\ge1,\end{array}\right.\\[12pt]
&=&4[xy\wedge((1-x)(1-y))].
\end{eqnarray*}
Moreover, it is easy to check, that $1\le\frac{1-(x-y)^2}{(1-x)(1-y)}\le4$ for $0\le x\le 1- y\le 1$. Since $f(x,y)=1-(x-y)^2=f(1-x,1-y)$, we get
for all $x,\, y\in (0,\, 1)$
 $$1\le\frac{1-(x-y)^2}{(xy)\vee((1-x)(1-y))}\le4.$$
This implies that for all $x,\, y\in (0,\, 1)$
\begin{align*}
\(1\wedge\frac{1-(x-y)^2}{t}\)&\(1\wedge\frac{(1-\left|1-x-y\right|)^2-(x-y)^2}{t}\)\\
&\le16\(1\wedge\frac{(xy)\vee(1-x)(1-y))}{t}\)\(1\wedge\frac{(xy)\wedge((1-x)(1-y))}t\)\\
&=16\(1\wedge\frac{xy}{t}\)\(1\wedge\frac{(1-x)(1-y)}{t}\),
\end{align*}
and similarly
\begin{align*}
\(1\wedge\frac{1-(x-y)^2}{t}\)\(1\wedge\frac{(1-\left|1-x-y\right|)^2-(x-y)^2}{t}\)\ge\(1\wedge\frac{xy}{t}\)\(1\wedge\frac{(1-x)(1-y)}{t}\).
\end{align*}
Theorem \ref{112} implies that in the estimate (\ref{eq:lambdaest}) of $\lambda(x)$ we have constants $c_1=0,07$ and $c_2=75,4$. 
Using this, the above inequalities, constants from Lemma 1 and inequality (\ref{eq:constinsubst}), we get the constants in the thesis of the theorem. 
\end{proof}

Recall that $M_t=\sup_{0<s\le t} Z_s$ and denote $m(t,x,y)=\p^x(M_t\in dy)$.
An estimate of this density is the most complicated. In the proof we will use three elementary lemmas, which will be proved in the Appendix.

\begin{theorem}
For all $0<x<y$ and $t>0$ the following estimate holds:
$$m(t;x,y)\approx\frac{x(y-x)}{y^{2}t}\frac{\sinh(\mu y)}{\sinh(\mu x)}\frac{(y^2+t)^{5/2}\(1+\frac{t^{3/2}}{y^3}+\frac{\sqrt t}{y-x}+\sqrt t\mu\)}{\(y^2+t((y\mu)^2+1)\)(t+yx)}\frac{e^{-(y-x)^2/2t-\frac{(y\mu)^2+\pi^2}{2y^2}t}}{\sqrt{1+\frac{(y-x)^2y^2}{t(y^2+t((y\mu)^2+1))}}}.$$
\end{theorem}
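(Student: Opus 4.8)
\emph{Sketch of the argument.} The starting point is $\{M_t<r_0\}=\{\tau_{r_0}>t\}$, which gives $m(t;x,y)=\frac{\partial}{\partial y}\p^x(\tau_y>t)$. As for $\gamma^{r_0}$ there is a scaling relation: if $Z$ is $\mathrm{BES}(3,\mu)$ then $a^{-1}Z_{a^2\,\cdot}$ is $\mathrm{BES}(3,a\mu)$ with running supremum $a^{-1}M_{a^2\,\cdot}$, whence (taking $a=y$) $m(t;x,y)=\frac1y\,\widetilde m\!\left(\frac{t}{y^2};\frac{x}{y}\right)$, where $\widetilde m$ denotes $m(\cdot\,;\cdot\,,1)$ computed with drift parameter $\mu y$; one checks that the right-hand side of the claimed estimate transforms the same way (all powers of $y$ cancel). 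So it suffices to prove the estimate for $y=1$, all $\mu,t>0$ and $0<x<1$, and there, as in Theorems \ref{smallt} and \ref{bigt}, I would treat $t\ge\tfrac14$ and $t\le\tfrac14$ separately, at the end splicing the two elementary approximants (comparing constants at $t=\tfrac14$, as for $\gamma$ in Corollary 1) and undoing the scaling.

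For $t\ge\tfrac14$ I would differentiate the series of Theorem 3 in $r_0$ and put $r_0=1$; this is legitimate since, for $t\ge\tfrac14$ and $r_0$ near $1$, the differentiated series is dominated by a convergent sum $\sum_n c_n(t,\mu)e^{-n^2\pi^2t/4}$ with $c_n$ polynomial in $n$. The $r_0$-derivative acts on $\sinh(\mu r_0)$, on $\sin(n\pi x/r_0)$, on $n^2\pi^2+\mu^2r_0^2$ and on the exponential; the $n=1$ term dominates, and after dividing by $\frac{2\pi\sinh\mu}{(\pi^2+\mu^2)\sinh(\mu x)}e^{-(\mu^2+\pi^2)t/2}$ it equals $\bigl(\pi^2t+\mu\coth\mu-\tfrac{2\mu^2}{\pi^2+\mu^2}\bigr)\sin(\pi x)-\pi x\cos(\pi x)$. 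A short case analysis ($x$ versus $\tfrac12$, $t(1-x)$ versus $1$, $\mu$ versus $1$), together with $\pi^2<\frac{\pi\sin(\pi x)}{x(1-x)}<4\pi$ used earlier, shows this is comparable to the remaining factors of the claimed approximant at $y=1$ (for $t\ge\tfrac14$ the factors $e^{-(1-x)^2/2t}$ and $\sqrt{1+\cdots}$ are both $\asymp1$). The tail $n\ge2$ is bounded by a convergent sum dominated by its $n=2$ term, exactly as in the proof of Theorem \ref{bigt}.

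For $t\le\tfrac14$ I would pass to the Gaussian picture, writing $m(t;x,1)=-\int_0^t\frac{\partial}{\partial y}\gamma^{y}(s,x)\big|_{y=1}\,ds$ and using formula \pref{p} (the interchanges of $\partial_y$, $\sum_k$, $\int_0^t ds$ being justified by the Gaussian decay in $k$ and integrability near $s=0$). Differentiation in $y$ turns the $k$-th summand into $(2k+1)\bigl(1-w_k^2/s\bigr)e^{-w_k^2/(2s)}$, $w_k=(2k+1)-x$, and from $\sinh(\mu y)$ also produces $-\mu\coth\mu\cdot\p^x(\tau_1\le t)$. Integrating by parts in $s$ via $(1-w_k^2/s)s^{-3/2}e^{-w_k^2/(2s)}=-2\,\partial_s\bigl(s^{-1/2}e^{-w_k^2/(2s)}\bigr)$ replaces each $s$-integral by $t^{-1/2}e^{-\mu^2t/2-w_k^2/(2t)}+\tfrac{\mu^2}{2}\int_0^t s^{-1/2}e^{-w_k^2/(2s)-\mu^2 s/2}\,ds=:A(|w_k|)$, a function decreasing in $|w_k|$. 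Grouping index $k\ge0$ with $-1-k$ (so $|w_k|=(2k+1)-x<(2k+1)+x=|w_{-1-k}|$), the pair $\{0,-1\}$ gives the main term $A(1-x)-A(1+x)$ and the rest a convergent tail. I would then estimate $A(1-x)-A(1+x)$, the tail, and the incomplete-Gamma-type integrals inside $A$ with the three elementary lemmas from the Appendix ($\p^x(\tau_1\le t)$ being handled by integrating the already-proved estimate of $\gamma^1$); a further split according as $1-x\le\sqrt t$ or $1-x\ge\sqrt t$ yields the factor $1+\sqrt t/(1-x)$ and the $\sqrt{1+(1-x)^2/(t(1+t(\mu^2+1)))}$ in the approximant.

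The delicate part is the bookkeeping for $t\le\tfrac14$. Unlike $\gamma^{r_0}$, governed by a single Gaussian series, the expansion of $m$ is genuinely a sum of pieces of different orders — the boundary term $\sim t^{-1/2}e^{-\mu^2t/2-(1-x)^2/(2t)}$, the ``area'' term $\sim\mu^2\!\int_0^t(\cdots)\,ds$, and $-\mu\coth\mu\cdot\p^x(\tau_1\le t)$ — each dominant in a different region of $(x,t,\mu)$, and the elaborate shape of the approximating function is precisely what is needed to describe their combined size uniformly. Holding all of them under simultaneous two-sided control, in particular tracking the near-cancellation $A(1-x)-A(1+x)$ that produces the $\sqrt{1+\cdots}$ denominator, is the main obstacle, and it is there that the Appendix lemmas (on integrals $\int_0^t s^{-1/2}e^{-a/s-bs}\,ds$ and on weighted Gaussian sums) carry the argument.
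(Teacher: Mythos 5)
Your route is genuinely different from the paper's, and it contains a real gap at its core. The paper never differentiates anything in the barrier: for $y'>y>x$ it writes, by the strong Markov property, $\p^x(M_t\in(y,y'))=\p^x(\tau_y<t,\,\tau_{y'}>t)=\int_0^t\gamma^{y}(u,x)\int_{t-u}^\infty\gamma^{y'}(v,y)\,dv\,du$, inserts the already proved two-sided bound \pref{gamma:estimates} into both factors (legitimate because everything is nonnegative), lets $y'\to y$, and then evaluates the resulting double integral up to constants with Lemmas \ref{lemma:int2}--\ref{lemma:int4}. The whole point of that identity is that it reduces the theorem to estimating an integral of manifestly positive factors, each already known up to absolute constants, so no cancellation ever has to be controlled. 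Your starting identity $m(t;x,y)=\frac{\partial}{\partial y}\p^x(\tau_y>t)$ applied to the exact series is sound in principle (and you rightly differentiate exact formulas, not the $\approx$ estimates), but it produces a sum of pieces of different signs and different orders: the boundary term, the $\mu^2\int_0^t$ term, the negative contribution $-\mu\coth(\mu)\,\p^x(\tau_1\le t)$ coming from $\partial_y\sinh(\mu y)$, and the near-cancelling difference $A(1-x)-A(1+x)$. A two-sided bound with absolute constants, uniform in $x$, $t$ and $\mu$, requires showing in every regime that the positive parts dominate the negative ones by a fixed factor; this is exactly what you call ``the main obstacle'', and it is not carried out --- yet it is the entire content of the theorem. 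As written, the proposal is an outline of a conceivable alternative proof, not a proof.

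Two concrete inaccuracies compound the gap. First, the Appendix lemmas are not what you describe: Lemmas \ref{lemma:int1}--\ref{lemma:int4} give two-sided bounds for $\int_a^b\frac{w}{1+w}e^{w/2}dw$, $\int_0^\infty\frac{(a+b+v)^{3/2}}{(b+v)^{3/2}}e^{-cv}dv$, $\int_0^\infty\frac{(w+a)^{5/2}}{(w+b)^2(w+c)}e^{-w}dw$ and $\int_0^a\frac{(w+b)^{3/2}}{\sqrt w\,(w+c)}e^{-w}dw$; they are tailored to the double integral arising from the paper's SMP decomposition, not to the incomplete integrals $\int_0^t s^{-1/2}e^{-w^2/(2s)-\mu^2 s/2}ds$ and weighted Gaussian sums your route generates, so you would need new lemmas --- in particular a two-sided estimate of the difference $A(1-x)-A(1+x)$, which is where the factor $\sqrt{1+\frac{(y-x)^2y^2}{t(y^2+t((y\mu)^2+1))}}$ must come from. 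Second, in the $t\ge\frac14$ branch, dominance of the $n=1$ term is proved in the paper only for the fixed-barrier series (Theorem \ref{bigt}); after differentiating in $r_0$ the terms acquire the extra factors you list, and both the tail bound and the final comparison of $\bigl(\pi^2t+\mu\coth\mu-\frac{2\mu^2}{\pi^2+\mu^2}\bigr)\sin(\pi x)-\pi x\cos(\pi x)$ with the target must be made uniform in $\mu$ (the target has nontrivial polynomial $\mu$-dependence through $\frac{1+\sqrt t\,\mu}{y^2+t((y\mu)^2+1)}$) and in $x$ near both endpoints --- again asserted, not shown. If you want to complete an argument efficiently, the idea worth borrowing from the paper is precisely the SMP identity, which turns the supremum density into a convolution of two exit-time densities and lets Theorem \ref{112} be used multiplicatively, bypassing all cancellations.
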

\begin{proof}
By the strong Markov property (SMP) we have for $y'>y>x$
\begin{eqnarray*}
\p^x\(M_t\in(y,y')\)&=&\p^x\(\tau_y<t,\tau_{y'}>t\)\\
&\stackrel{SMP}{=}&\mathbb{E}^x\[\tau_y<t;\mathbb{E}^{y}\(\tau_{y'}>t-\tau_y\)\]\\
&=&\int_0^t\gamma^{y}(u,x)\int_{t-u}^\infty \gamma^{y'}(v,y)\,dv\,du.
\end{eqnarray*}
By (\ref{gamma:estimates}) we obtain
\begin{align*}\p^x\(M_t\in(y,y')\)\approx& \frac{x(y-x)y(y'-y)}{(y'y)^4}\frac{\sinh(\mu y')}{\sinh(\mu x)}\\
&\times\int_0^t\frac{(y^{2}+u)^{5/2}}{(u+yx)u^{3/2}}e^{-\frac{(y\mu)^2+\pi^2}{2y^2}u-\frac{(y-x)^2}{2u}}\int_{t-u}^\infty \frac{(y'^2+v)^{5/2}}{(v+y'y)v^{3/2}}e^{-\frac{(y'\mu)^2+\pi^2}{2y'^2}v-\frac{(y'-y)^2}{2v}}\,dv\,du\\
:=&F(t,x;y,y').
\end{align*}
Since $m(t;x,y)$ is continuous (it is obtained by differentiating formula (\ref{supremum}) with respect to $r_0$), then $m(t;x,y)=\lim_{y'\rightarrow y}\frac{\p^x\(M_t\in(y,y')\)}{y'-y}$, hence
\begin{align*}
m(t;x,y)\approx&\lim_{y'\rightarrow y}\frac{F(t,x;y,y')}{y'-y}\\
=&\frac{x(y-x)}{y^{7}}\frac{\sinh(\mu y)}{\sinh(\mu x)}\\
&\times\int_0^t\frac{(y^{2}+u)^{5/2}}{(u+yx)u^{3/2}}e^{-\frac{(y\mu)^2+\pi^2}{2y^2}u-\frac{(y-x)^2}{2u}}\int_{t-u}^\infty \frac{(y^2+v)^{3/2}}{v^{3/2}}e^{-\frac{(y\mu)^2+\pi^2}{2y^2}v}\,dv\,du\\
=&\frac{x(y-x)}{y^{7}}\frac{\sinh(\mu y)}{\sinh(\mu x)}e^{-\frac{(y\mu)^2+\pi^2}{2y^2}t}\\
&\times\int_0^t\frac{(y^{2}+u)^{5/2}}{(u+yx)u^{3/2}}e^{-\frac{(y-x)^2}{2u}}\int_{0}^\infty \frac{(y^2+t-u+v)^{3/2}}{(t-u+v)^{3/2}}e^{-\frac{(y\mu)^2+\pi^2}{2y^2}v}\,dv\,du.
\end{align*}
We apply Lemma 2 from Appendix with $a=y^2$, $b=t-u$, $c=\frac{(y\mu)^2+\pi^2}{2y^2}$ to the inner integral and get
$$m(t;x,y)\approx\frac{x(y-x)}{y^{5}}\frac{\sinh(\mu y)}{\sinh(\mu x)}e^{-\frac{(y\mu)^2+\pi^2}{2y^2}t}\int_0^t\frac{(y^{2}+u)^{5/2}(y^2+t-u)^{3/2}e^{-\frac{(y-x)^2}{2u}}}{\sqrt {t-u}(u+yx)u^{3/2}\(y^2+(t-u)((y\mu)^2+1)\)}du.
$$
We split the above-given integral into two parts $\int_0^{t/2}+\int_{t/2}^t=I_1+I_2$ and substitute $w=\frac{(y-x)^2}{2}(\frac1u-\frac2t)$, $w=\frac{(y-x)^2}{2}(\frac1u-\frac1t)$ in $I_1$, $I_2$, respectively. Before substitution we use the following estimates:
$t-u\approx t$  for $u\in\(0,\frac t2\)$ and $u\approx t$  for $u\in\(\frac t2, t\)$. Additionally, the second substitution gives $t-u=2t^2w/(2tw+(y-x)^2)$. Consequently
\begin{align*}
I_1\approx&\,\frac{(y^2+t)^{3/2}}{\sqrt {t}\(y^2+t((y\mu)^2+1)\)}\int_0^{t/2}\frac{(y^{2}+u)^{5/2}e^{-\frac{(y-x)^2}{2u}}}{(u+yx)u^{3/2}}du
\\=&\,\frac{\sqrt2y^4(y^2+t)^{3/2}e^{-(y-x)^2/t}}{\sqrt tx(y-x)\(y^2+t((y\mu)^2+1)\)}\int_0^\infty\frac{\(w+(y-x)^2(\frac1t+\frac1{2y^2})\)^{5/2}e^{-w}}{\(w+\frac{(y-x)^2}t\)^2\(w+(y-x)^2(\frac1t+\frac1{2xy})\)}dw,\\
I_2\approx&\,\frac{(y^{2}+t)^{5/2}}{(t+yx)t^{3/2}}\int_{t/2}^t\frac{(y^2+t-u)^{3/2}e^{-\frac{(y-x)^2}{2u}}}{\sqrt {t-u}\(y^2+(t-u)((y\mu)^2+1)\)}du\\
=&\,\frac{2(y^{2}+t)^{4}e^{-(y-x)^2/2t}}{(y-x)^2(t+yx)(y^2+t(y\mu)^2+1)}\int_0^{(y-x)^2/2t}\frac{(w+\frac{(y-x)^2y^2}{2t(y^2+t)})^{3/2}e^{-w}}{\sqrt w\(w+\frac{(y-x)^2y^2}{2t(y^2+t((y\mu)^2+1))}\)}dw.\\
\end{align*}
Now we apply Lemma 3 to the integral in the estimate of $I_1$ with $a=(y-x)^2(\frac{1}{t}+\frac1{2y^2})$, $b=\frac{(y-x)^2}{t}$, $c=(y-x)^2(\frac1t+\frac1{2yx})$ and Lemma 4 to the integral in the estimate of $I_2$ with $a=\frac{(y-x)^2}{2t}$, $b=\frac{(y-x)^2y^2}{2t(y^2+t)}$, $c=\frac{(y-x)^2y^2}{2t(y^2+t((y\mu)^2+1))}$. For $\frac{(y-x)^2}{2t}\ge1$ we have $y^2\ge t$ which implies
$$\frac{(y-x)^2y^2}{2t(y^2+t)}=\frac{(y-x)^2}{t}\frac{1}{2(1+\frac{t}{y^2})}\approx \frac{(y-x)^2}{t},$$
so all assumptions of Lemma 4 are satisfied. Hence
\begin{align*}
I_1+I_2\approx&\,\frac{(y^2+t)^{3/2}y^4e^{-(y-x)^2/t}}{\sqrt {t}\(y^2+t((y\mu)^2+1)\)x(y-x)}\(\frac{t(y-x)(\frac{y^2+t}{ty^2})^{5/2}}{(1+\frac{(y-x)^2}{t})(\frac{yx+t}{yxt})}+\frac1{1+(y-x)^2(\frac{yx+t}{yxt})}\)\\
&\,+\frac{(y^{2}+t)^{4}e^{-(y-x)^2/2t}}{(y-x)^2(t+yx)(y^2+t((y\mu)^2+1))}\times\\
&\,\times\(1\wedge\frac{(y-x)^2}{2t}+\frac{\sqrt{t(y^2+t((y\mu)^2+1))}(y-x)^2y^2}{((y^2+t)t)^{3/2}\sqrt{1+\frac{(y-x)^2y^2}{t(y^2+t((y\mu)^2+1))}}}\)\\
=&\,\frac{(y^2+t)^{3/2}y^4e^{-(y-x)^2/2t}}{\sqrt {t}\(y^2+t((y\mu)^2+1)\)(t+yx)}\(\frac{(y^2+t)^{5/2}e^{-(y-x)^2/2t}}{\sqrt ty^4(1+\frac{(y-x)^2}{t})}+\frac{\frac{t+yx}{x(y-x)}e^{-(y-x)^2/2t}}{1+(y-x)^2(\frac{yx+t}{yxt})}+\right.\\&\,+\left.\(1\wedge\frac{(y-x)^2}{2t}\)\frac{(y^2+t)^{5/2}\sqrt t}{(y-x)^2y^4}+\frac{(y^2+t)\sqrt{y^2+t((y\mu)^2+1)}}{ \sqrt ty^2\sqrt{1+\frac{(y-x)^2y^2}{t(y^2+t((y\mu)^2+1))}}}\).
\end{align*}
Let us denote the expression in the last brackets by $\mathcal J(y,x,t,\mu)$. To prove the theorem we need to show that 
\begin{equation}\label{J}
\mathcal J(y,x,t,\mu)\approx\frac{(y^2+t)}{y\sqrt t}\frac{1+\frac{t^{3/2}}{y^3}+\frac{\sqrt t}{y-x}+\sqrt t\mu  }{\sqrt{1+\frac{(y-x)^2y^2}{t(y^2+t((y\mu)^2+1))}}}.
\end{equation}
Assume  $\frac{(y-x)^2}{t}>1$. Then we have $y^2>t$, which implies
\begin{align*}
\frac{(y^2+t)^{5/2}e^{-(y-x)^2/2t}}{\sqrt ty^4(1+\frac{(y-x)^2}{t})}&<2\frac{(y^2+t)^{3/2}}{\sqrt ty^2\sqrt{1+\frac{(y-x)^2}{t}}}<2\frac{(y^2+t)\sqrt{y^2+t((y\mu)^2+1})}{\sqrt ty^2\sqrt{1+\frac{(y-x)^2y^2}{ t(y^2+t((y\mu)^2+1))}}},\\
\frac{\frac{t+yx}{x(y-x)}e^{-(y-x)^2/2t}}{1+(y-x)^2(\frac{yx+t}{yxt})}&<\frac{\frac{\sqrt t}{(y-x)}y}{\sqrt t\frac{(y-x)^2}t}\(\frac{y^2+t}{y^2}\)<2\frac{(y^2+t)\sqrt{y^2+t((y\mu)^2+1})}{\sqrt ty^2\sqrt{1+\frac{(y-x)^2y^2}{ t(y^2+t((y\mu)^2+1))}}},\\
\(1\wedge\frac{(y-x)^2}{2t}\)\frac{(y^2+t)^{5/2}\sqrt t}{(y-x)^2y^4}&<2\frac{(y^2+t)^{3/2}}{\sqrt ty^2\frac{y-x}{\sqrt t}}<2\frac{(y^2+t)\sqrt{y^2+t((y\mu)^2+1})}{\sqrt ty^2\sqrt{1+\frac{(y-x)^2y^2}{ t(y^2+t((y\mu)^2+1))}}},
\end{align*}
so we can estimate $\mathcal J(y,x,t,\mu)$ from the above by its last component.
But for $x,y,z >0$ there holds $\sqrt{x^2+y^2+z^2}\approx x+y+z$, because $l_2$ and $l_1$ norms are equivalent in $\R^3$.
Hence $\frac{\sqrt{y^2+t((y\mu)^2+1)}}{y} \approx 1+\sqrt t\mu+\frac{\sqrt t}{y} $ and 
because $(y-x)^2>t$, then (\ref{J}) follows. 

In the case $\frac{(y-x)^2}{t}\le1$ we have $t+xy\approx t+y^2$ and also (recall that $0<x<y$)
 $$1+(y-x)^2\frac{yx+t}{yxt}=\frac yx+\frac xy+\frac{(y-x)^2}{t}-1\approx\frac yx.$$
Hence
\begin{align*}
\mathcal J(y,x,t,\mu)\approx&\, \frac{(y^2+t)^{5/2}}{\sqrt ty^4}+\frac{t+y^2}{y(y-x)}+\frac{(y^2+t)^{5/2}}{\sqrt ty^4}+\frac{(y^2+t)\sqrt{y^2+t((y\mu)^2+1)}}{ \sqrt ty^2}\\
=&\,\frac{y^2+t}{y\sqrt t}\(\frac{2(y^2+t)^{3/2}}{y^3}+\frac{\sqrt t}{y-x}+\frac{\sqrt{y^2+t((y\mu)^2+1)}}{ y}\)\\
\approx&\,\frac{y^2+t}{y\sqrt t}\(1+\frac{t^{3/2}}{y^3}+\frac{\sqrt t}{y-x}+\sqrt t\mu+\frac{\sqrt t}y\),
\end{align*}
which again is equivalent to (\ref{J}).
\end{proof}

\section{Appendix}
Here we gathered four lemmas which were used in the estimates carried out in the previous section.
\begin{lemma}\label{lemma:int1}
For $0<a<b$ we have
\begin{align*}\frac1{12}\le\frac{\int_a^b\frac{w}{1+w}e^{w/2}dw}{e^{b/2}(1\wedge b) (1\wedge(b-a))}\le 2.
\end{align*}
\end{lemma}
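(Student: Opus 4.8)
Write $N(a,b)=\int_a^b\frac{w}{1+w}e^{w/2}\,dw$ and $D(a,b)=e^{b/2}(1\wedge b)(1\wedge(b-a))$; the claim is exactly $\frac1{12}D\le N\le 2D$. The upper bound is the easy half. On $[0,b]$ we have both $\frac{w}{1+w}\le 1$ and $\frac{w}{1+w}\le w\le b$, i.e. $\frac{w}{1+w}\le 1\wedge b$, so $N(a,b)\le(1\wedge b)\int_a^b e^{w/2}\,dw=2(1\wedge b)e^{b/2}\bigl(1-e^{-(b-a)/2}\bigr)$. The elementary inequalities $1-e^{-x}\le 1$ and $1-e^{-x}\le x$ give $1-e^{-(b-a)/2}\le 1\wedge\frac{b-a}2\le 1\wedge(b-a)$, whence $N\le 2D$; the constant $2$ is essentially sharp, since the ratio tends to $2$ as $a=0,\ b\to\infty$.

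For the lower bound I would split on the size of $b-a$. Suppose first $b-a\le 1$; then $a\ge b-1$ and $1\wedge(b-a)=b-a$. I drop $e^{w/2}\ge e^{a/2}$ and use that $w\mapsto\frac{w}{1+w}$ is concave on $[0,\infty)$, so its graph lies above the chord over $[a,b]$; integrating that chord and discarding the nonnegative endpoint value at $a$ gives $\int_a^b\frac{w}{1+w}\,dw\ge\frac{b-a}2\cdot\frac{b}{1+b}$. A one-line check gives $\frac{b}{1+b}\ge\frac12(1\wedge b)$ (use $1+b\le 2$ for $b\le 1$, and $\frac{b}{1+b}\ge\frac12$ for $b\ge 1$), while $a\ge b-1$ gives $e^{a/2}\ge e^{(b-1)/2}=e^{-1/2}e^{b/2}$. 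Multiplying, $N(a,b)\ge e^{a/2}\cdot\frac{b-a}2\cdot\frac{b}{1+b}\ge\frac1{4\sqrt e}e^{b/2}(1\wedge b)(b-a)=\frac1{4\sqrt e}D(a,b)\ge\frac1{12}D(a,b)$, since $4\sqrt e<12$.

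If instead $b-a>1$, then $b>1$, so $1\wedge b=1\wedge(b-a)=1$ and $D(a,b)=e^{b/2}$; here I localize the integral. Since $b-a>1$ we have $[b-1,b]\subseteq[a,b]$, and the bound of the previous paragraph applied to the unit interval $[b-1,b]$ (that is, with $a$ replaced by $b-1$, so that the corresponding "$b-a$" equals $1$) yields $N(a,b)\ge\int_{b-1}^b\frac{w}{1+w}e^{w/2}\,dw\ge\frac1{4\sqrt e}e^{b/2}=\frac1{4\sqrt e}D(a,b)\ge\frac1{12}D(a,b)$, which finishes the estimate.

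There is no real obstacle here: every step is elementary, and the method in fact produces the sharper lower constant $1/(4\sqrt e)\approx 0{,}15$, so the stated $\frac1{12}$ has comfortable slack (the upper constant $2$, by contrast, is optimal). The only points that need a moment's attention are the observation that when $b-a$ is large the bulk of $N$ must be harvested from a bounded sub-interval near $b$, where $\frac{w}{1+w}$ is bounded away from $0$, and keeping the concavity-plus-endpoint bookkeeping tidy; an alternative route through $\frac{w}{1+w}=1-\frac1{1+w}$ together with integration by parts on $\int\frac{e^{w/2}}{1+w}\,dw$ also works but is messier.
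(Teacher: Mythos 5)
Your argument is correct, and it takes a route that differs in its details from the paper's. You prove one unified upper bound from the pointwise estimate $\frac{w}{1+w}\le 1\wedge b$ on $[a,b]$ together with $1-e^{-x}\le 1\wedge x$, and for the lower bound you split on the size of $b-a$: when $b-a\le 1$ you freeze the exponential at $e^{a/2}\ge e^{-1/2}e^{b/2}$ and integrate $\frac{w}{1+w}$ via the concavity (chord) estimate, and when $b-a>1$ you simply reuse that short-interval bound on $[b-1,b]$. The paper instead splits on whether $b\le 1$: for $b\le1$ it traps the decreasing factor $\frac{e^{w/2}}{1+w}$ between $\frac{\sqrt e}{2}$ and $1$ and integrates $w$, and for $b>1$ it restricts to $\left[\frac{a+b}{2},b\right]$, where $\frac{w}{1+w}\ge\frac13$, integrates the exponential, and uses $1-e^{-x}\ge\frac12(1\wedge x)$. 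Both arguments are elementary and of comparable length; yours has the minor advantages of an upper bound valid in all cases at once, a reusable short-interval lemma in place of the midpoint localization, and a slightly better lower constant $1/(4\sqrt{e})\approx 0{,}15$ versus the paper's $1/12$, and your observation that the constant $2$ is asymptotically attained is a nice addition. Two cosmetic points: your pointwise bound is stated on $[0,b]$ but is used (and only needed) on $[a,b]$, and the sharpness remark should take $a\to 0^{+}$ rather than $a=0$, since the hypothesis requires $a>0$; neither affects the proof.
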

\begin{proof}
Let us denote $\mathcal I(a,b)=\int _a^b\frac{w}{1+w}e^{w/2}dw$ and let $a,b\le1$. 
For $w\in(0,\, 1)$ the function $f(w)=\frac{e^{w/2}}{1+w}$ is decreasing hence for such $w$ there holds
$\frac{\sqrt{e}}{2}\le f(w)\le 1$. Using these inequalities we get
$$
\frac{\sqrt{e}}{2} \int _a^b w \,dw \le \int _a^b\frac{w}{1+w}e^{w/2}\,dw \le  \int _a^b w \,dw=b^2-a^2=(b-a)(b+a).
$$
Since $0<a<b\le 1$, we have  $b<b+a<2b$ and this implies
$$
\frac{1}{2}e^{b/2}b(b-a)\le\frac{\sqrt{e}}{2}b(b-a)
\le \mathcal I(a,b) \le \int _a^b w\,dw  \le 2b(b-a)\le 2e^{b/2}b(b-a).$$
Now let  $b>1$. We use inequality $1-e^{-x}\le1\wedge x,$ $x\ge0$, and get
$$\mathcal I(a,b)\le\int_a^be^{w/2}dw=e^{b/2}\(1-e^{-(b-a)/2}\)\le e^{b/2}\(1\wedge(b-a)\).$$
For $x\ge0$ it holds that $1-e^{-x}\ge\frac{x}{1+x}\ge\frac12(1\wedge x)$. Using this, the fact that 
the function $x\rightarrow \frac x{1+x}$ is increasing  and $(a+b)/2 \ge 1/2$, we obtain
\begin{align*}
\mathcal I(a,b)\ge&\,\int _{\frac{a+b}2}^b\frac{w}{1+w}e^{w/2}dw\ge\frac{\frac{a+b}2}{1+\frac{a+b}2}\int _{\frac{a+b}2}^be^{w/2}dw\ge \frac{\frac{1}2}{1+\frac{1}2}2e^{b/2}\(1-e^{-(b-a)/4}\)\\
\ge&\,\frac{1}{12}e^{b/2}\(1\wedge(b-a)\),
\end{align*}
which ends the proof.
\end{proof}
\begin{lemma}\label{lemma:int2}
For $a,b,c>0$ and $ac>1$ we have 
\begin{equation}\label{int2}\int_{0}^\infty \frac{(a+b+v)^{3/2}}{(b+v)^{3/2}}e^{-cv}\,dv\approx\frac{(a+b)^{3/2}}{\sqrt b(1+bc)}.\end{equation}
\end{lemma}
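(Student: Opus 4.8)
The plan is to remove the parameter $c$ by the substitution $u=cv$: writing $A=ac$ and $B=bc$, the left-hand side equals $\frac1c\,J(A,B)$ with $J(A,B)=\int_0^\infty\frac{(A+B+u)^{3/2}}{(B+u)^{3/2}}\,e^{-u}\,du$, while the right-hand side equals $\frac1c\cdot\frac{(A+B)^{3/2}}{\sqrt B\,(1+B)}$; the factor $\frac1c$ cancels, so it suffices to prove $J(A,B)\approx\frac{(A+B)^{3/2}}{\sqrt B\,(1+B)}$ under the sole remaining quantitative hypothesis $A>1$ (with $B>0$ arbitrary). I would then split on whether $B\ge1$ or $B<1$ (equivalently $bc\ge1$ or $bc<1$); since $\sqrt B(1+B)=\sqrt B+B^{3/2}$ is comparable to $B^{3/2}$ when $B\ge1$ and to $\sqrt B$ when $B<1$, the target reads $\approx(A+B)^{3/2}/B^{3/2}$ in the first regime and $\approx(A+B)^{3/2}/\sqrt B$ in the second.

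The regime $B\ge1$ is almost free from the fact that $g(u)=\bigl(1+\tfrac{A}{B+u}\bigr)^{3/2}$ is decreasing in $u$: on one hand $J(A,B)\le g(0)\int_0^\infty e^{-u}\,du=(A+B)^{3/2}/B^{3/2}$, and on the other hand $J(A,B)\ge g(1)\int_0^1 e^{-u}\,du=(1-e^{-1})\,\frac{(A+B+1)^{3/2}}{(B+1)^{3/2}}$, which is $\gtrsim(A+B)^{3/2}/B^{3/2}$ because $B+1\le2B$. This already matches the target.

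The regime $B<1$ is the real work: the crude monotonicity upper bound $(A+B)^{3/2}/B^{3/2}$ over-counts, carrying $B^{-3/2}$ in place of the correct $B^{-1/2}$, and repairing this is the heart of the argument. I would split $\int_0^\infty=\int_0^A+\int_A^\infty$. On $[A,\infty)$ one has $1+\tfrac{A}{B+u}\le2$, so $\int_A^\infty\le 2^{3/2}e^{-A}\le2^{3/2}e^{-1}$, a bounded quantity that is harmless because $A>1$ and $B<1$ already force the target $\gtrsim1$. On $[0,A]$, since $B<A$ we have $A\le A+B+u\le3A$, hence $\int_0^A\approx A^{3/2}\int_0^A\frac{e^{-u}}{(B+u)^{3/2}}\,du$; as $A>1$, the piece $\int_1^A\frac{e^{-u}}{(B+u)^{3/2}}\,du\le e^{-1}$ is dominated by $\int_0^1\frac{e^{-u}}{(B+u)^{3/2}}\,du$, so everything reduces to the elementary computation $\int_0^1(B+u)^{-3/2}\,du=2\bigl(B^{-1/2}-(1+B)^{-1/2}\bigr)\approx B^{-1/2}$ for $0<B\le1$. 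The matching lower bound needs even less: $J(A,B)\ge\int_0^1 A^{3/2}(B+u)^{-3/2}e^{-u}\,du\gtrsim A^{3/2}B^{-1/2}$. Combining the two regimes yields the lemma. I expect the only delicate point to be the bookkeeping in the case $B<1$ --- the two splittings and the elementary estimate of $\int_0^1(B+u)^{-3/2}\,du$ --- while the case $B\ge1$ and all the tail bounds are nothing more than monotonicity and crude estimates.
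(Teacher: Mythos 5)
Your proposal is correct and follows essentially the same route as the paper: the substitution $u=cv$ reducing everything to $A=ac$, $B=bc$, the case split on $bc\gtrless 1$, and elementary comparisons of the resulting integrals. The only difference is bookkeeping in the case $bc<1$ (you split the rescaled integral at $u=A$ and absorb the tail as a bounded term, while the paper splits at $1$ and rescales again by $s=bc\,t$), which does not change the substance of the argument.
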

\begin{proof}
Let us denote the integral in the thesis by $I(a,b,c)$. Substituting $v=\frac sc$ we obtain
\begin{equation}\label{Iabc}I(a,b,c)=\frac1c\int_{0}^\infty \frac{((a+b)c+s)^{3/2}}{(bc+s)^{3/2}}e^{-s}\,ds.\end{equation}
For $bc\ge1$ we get
$$I(a,b,c)=\frac1c \left(\frac{a+b}{b}\right)^{3/2}\int_{0}^\infty \frac{(1+s/((a+b)c))^{3/2}}{(1+s/(bc))^{3/2}}e^{-s}\,ds \approx \frac1c\(\frac{a+b}{b}\)^{3/2},$$
which is equivalent to  \ref{int2}. For $bc<1$ we split the integral in (\ref{Iabc}) as follows: $\frac{1}{c}\int_0^1+\frac{1}{c}\int_1^\infty=I_1+I_2$. Then 
$$I_2\approx \sqrt c{(a+b)^{3/2}}.$$
In $I_1$  we substitute $s=bc\,t$ and get
$$
I_1\approx \,b\int_{0}^{1/bc} \frac{\(\frac{a+b}b\)^{3/2}+t^{3/2}}{(1+t)^{3/2}}e^{-bct}\,dt 
\approx \,b\(\frac{a+b}b\)^{3/2}+\frac1c,
$$
hence
\begin{align*}
I_1+I_2\approx\frac1c\(bc\(\frac{a+b}b\)^{3/2}+1+(bc)^{3/2}\(\frac{a+b}b\)^{3/2}\).
\end{align*}
From the assumptions $bc<1$ and $ac>1$ it follows  that
$$bc\(\frac{a+b}b\)^{3/2}>(bc)^{3/2}\(\frac{a+b}b\)^{3/2}>1,$$
which ends the proof of the lemma.
\end{proof}

\begin{lemma}\label{lemma:int3}
Let $a,c>b>0$ and $a<2$ if $b<1$. Then we have 
\begin{equation}\label{int3}
\int_0^\infty\frac{(w+a)^{5/2}}{(w+b)^2(w+c)}e^{-w}dw\approx\frac{a^{5/2}}{b(b+1)c}+\frac{1}{1+c}.
\end{equation}
\end{lemma}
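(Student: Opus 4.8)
The plan is to mimic the two-regime strategy used in the proofs of Lemmas 2 and 3: the behaviour of the integral is governed by whether the ``mass'' of $e^{-w}dw$ (concentrated on $w\lesssim 1$) sees the small parameters $b,c$ as effectively zero or not, so I would split according to the size of $b$ (and, within that, $c$). First I would record the trivial two-sided bound on the algebraic factor: since $a>b$ and $a>c/?$—actually only $a,c>b$ is assumed—I would note that $(w+a)^{5/2}/((w+b)^2(w+c))$ is, up to absolute constants, comparable to $\big(a^{5/2}+w^{5/2}\big)\big/\big((b+w)^2(c+w)\big)$, using $(w+a)^{5/2}\approx a^{5/2}+w^{5/2}$ (equivalence of $\ell_1$ and $\ell_2$-type sums, exactly as the paper does elsewhere). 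This reduces the claim to estimating $\int_0^\infty (a^{5/2}+w^{5/2})e^{-w}\,dw/((b+w)^2(c+w))$ from above and below by $a^{5/2}/(b(b+1)c)+1/(1+c)$.

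\smallskip

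Next I would treat the case $c\ge 1$ (equivalently $b+w\ge b$, $c+w\approx c+ (1\wedge w)$ does not help, so instead): when $c\ge 1$ we have $c+w\approx c(1+w)$ only if $c\gtrsim 1$, so $1/(c+w)\approx 1/c$ on the bulk $w\le 1$ and $\le 1/c$ always; together with $b\le c$ one expects $\int\approx a^{5/2}/(b(1+b)c)$, and since $c\ge1$ gives $1/(1+c)\approx 1/c\le a^{5/2}/(b(1+b)c)$ is not automatic—so I will need $a^{5/2}\gtrsim b(1+b)$, which follows from $a>b$ together with the side hypothesis $a<2$ when $b<1$ (so $b(1+b)\lesssim b\lesssim a\le a^{5/2}\cdot(\text{const})$ is false for small $a$!). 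This is the delicate point, see below. Assuming it is handled, for $c\ge 1$ I would bound $\int_0^\infty a^{5/2}e^{-w}dw/((b+w)^2(c+w))$ by splitting at $w=b$: on $(0,b)$ the factor $1/(b+w)^2\approx 1/b^2$ and $\int_0^b e^{-w}dw\approx (1\wedge b)$, giving $a^{5/2}(1\wedge b)/(b^2 c)=a^{5/2}/(b(b\vee ?)\cdots)$—one sees $a^{5/2}/(b\,c)$ when $b\le1$ and $a^{5/2}/(b^2 c)\cdot(\text{something}\approx 1)$ when $b\ge 1$, i.e.\ $a^{5/2}/(b(1+b)c)$ in both cases; on $(b,\infty)$ one gets a strictly smaller contribution. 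The $w^{5/2}$-part contributes $\lesssim \int_b^\infty w^{5/2}e^{-w}dw/((b+w)^2(c+w))\lesssim 1/c\lesssim 1/(1+c)$. This gives both directions when $c\ge1$.

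\smallskip

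For the remaining case $b\le c<1$ I would substitute $w=ct$ in the ``small'' part and keep $e^{-w}dw$ intact in the ``large'' part: write $\int_0^\infty=\int_0^1+\int_1^\infty$. On $\int_1^\infty$, $w+a\approx a+w$, $w+b\approx w$, $w+c\approx w$, so it is $\approx\int_1^\infty(a^{5/2}+w^{5/2})e^{-w}dw\approx a^{5/2}+1\approx 1+a^{5/2}$, and since $c<1$ this is $\approx (1+a^{5/2})\approx 1/(1+c)+a^{5/2}$, which is $\lesssim$ the target (here again I use $a\gtrsim b$ and the normalisation $a<2$ when $b<1$ to bound $a^{5/2}/(b(1+b)c)$ from below by a constant times $a^{5/2}$, hence absorb it). On $\int_0^1$, since $w\le 1<1/c$, $e^{-w}\approx 1$, and I substitute $w=ct$, reducing to $c\int_0^{1/c}(ct+a)^{5/2}/((ct+b)^2(ct+c))\,dt=\frac1c\int_0^{1/c}(ct+a)^{5/2}/((ct+b)^2(t+1))\,dt$; then split once more at $t=b/c$ and estimate $(ct+a)^{5/2}\approx a^{5/2}$ on the whole range (as $ct\le c/c=1\le ?$; one needs $ct\lesssim a$, true since $ct\le 1$ and—by the hypothesis—$a\gtrsim b$ while $a<2$ forces the relevant comparison). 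Collecting the pieces yields $\frac1c\big(a^{5/2}\cdot\frac{c}{b(b+1)}\big)+O(1)=a^{5/2}/(b(b+1)c)+O(1)$, matching the claimed form.

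\smallskip

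\textbf{Main obstacle.} The genuinely tricky bookkeeping is the role of the side hypothesis ``$a<2$ if $b<1$''. It is there precisely so that the term $1/(1+c)$ and the term $a^{5/2}/(b(b+1)c)$ can be compared on the relevant ranges (preventing $a^{5/2}$ from being too small relative to $b(1+b)$, and preventing $(w+a)^{5/2}\approx a^{5/2}$ from failing on $w\lesssim 1$). I expect the main work to be a careful case analysis—$b<1$ vs.\ $b\ge 1$, crossed with $c<1$ vs.\ $c\ge1$—verifying in each of the (at most three, since $b\le c$) cases that the elementary estimates above produce matching upper and lower constants; none of the individual estimates is hard, but keeping the constants honest across all cases is where the care is needed.
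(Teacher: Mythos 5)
Your overall skeleton is the same as the paper's (the reduction $(w+a)^{5/2}\approx a^{5/2}+w^{5/2}$, splitting the integral at $1$, and rescaling the small parameter out of the singular piece), but the case analysis as you sketch it has a genuine gap, and it is exactly at the point you flagged and then skipped. In your $c\ge1$ case you want the integral to be comparable to $a^{5/2}/(b(1+b)c)$ alone, which requires absorbing $1/(1+c)$, i.e. $a^{5/2}\gtrsim b(1+b)$. This is simply false when $b<1$ and $a$ is small (take $a=2b$ with $b$ tiny and $c$ large: then $a^{5/2}/(b(1+b)c)\ll 1/(1+c)$, while the integral is of order $1/(1+c)$ because of the contribution of $w\asymp 1$, where the integrand is $\asymp w^{1/2}e^{-w}/(w+c)$). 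Saying ``assuming it is handled'' does not help, because it cannot be handled: in that regime the term $1/(1+c)$ genuinely dominates both sides and must be kept, not absorbed. Your reading of the hypothesis ``$a<2$ if $b<1$'' is inverted: it is an upper bound on $a$ and therefore cannot prevent $a^{5/2}$ from being small relative to $b(1+b)$. In the paper it is used, only in the case $b<1$, to bound $a^{5/2}$ from \emph{above}, so that the cross term $a^{5/2}\cdot\frac1{1+c}$ arising from $a^{5/2}\int_0^\infty e^{-w}\,dw/((w+b)^2(w+c))\approx a^{5/2}\bigl(\frac1{bc}+\frac1{1+c}\bigr)$ can be swallowed by $1/(1+c)$. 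For the same reason your step ``$(ct+a)^{5/2}\approx a^{5/2}$ on the whole range $t\le 1/c$'' fails for small $a$: you must first split off the $w^{5/2}$ part (as you yourself proposed at the outset and then dropped), since it is precisely this part that produces the $1/(1+c)$ term via $\int_0^\infty \frac{w^{5/2}}{(w+b)^2(w+c)}e^{-w}dw\approx \frac1{1+c}$ when $b<1$.

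The clean repair is the paper's case split on $b$, not on $c$: for $b\ge1$ the whole integrand localizes ($w+a\approx a$, $w+b\approx b$, $w+c\approx c$ on the effective range $w\lesssim1$), giving $F\approx a^{5/2}/(b^2c)$, and here $1/(1+c)$ really is dominated because $a>b\ge1$ gives $b^2\le a^2\le a^{5/2}$; for $b<1$ one keeps both pieces of $a^{5/2}+w^{5/2}$, shows $\int_0^\infty e^{-w}\,dw/((w+b)^2(w+c))\approx \frac1{bc}+\frac1{1+c}$ (substituting $w=bu$ on $(0,1)$), and only then uses $a<2$ to collapse $a^{5/2}\bigl(\frac1{bc}+\frac1{1+c}\bigr)+\frac1{1+c}$ to $\frac{a^{5/2}}{bc}+\frac1{1+c}$. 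Without this rearrangement your upper and lower bounds do not match in the regime $b<1\le c$, $a\approx b$ small.
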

\begin{proof}
Denote the integral in the thesis by $F(a,b,c)$. It is clear that for $b\ge1$ we have $F(a,b,c)\approx\frac{a^{5/2}}{b^2c}$, which is equivalent to (\ref{int3}). For $b<1$ we split the integral into two parts
$$F(a,b,c)\approx a^{5/2}\int_0^\infty \frac{1}{(w+b)^2(w+c)}e^{-w}dw+\int_0^\infty\frac{w^{5/2}}{(w+b)^2(w+c)}e^{-w}dw.$$
The last integral can be approximated by $\frac1{1+c}$, which is shown below: 
\begin{align*}
\frac1{1+c}\approx\int_0^\infty\frac{w^{3/2}}{(w+1)^2}\frac{w}{w+c}e^{-w}dw&\le \int_0^\infty\frac{w^{5/2}}{(w+b)^2(w+c)}e^{-w}dw\\
&\le \int_0^\infty\frac{1}{\sqrt w}\frac{w}{w+c}e^{-w}dw\approx\frac1{1+c}.
\end{align*}
Moreover,
\begin{align*}
 \int_0^\infty \frac{1}{(w+b)^2(w+c)}e^{-w}dw=&\int_0^1+\int_1^\infty=\\
=&\frac{1}{bc}\int_0^{1/b}\frac{e^{-bu}du}{(1+u)^2(1+\frac{b}{c}u)}+\int_0^\infty\frac{e^{-(w+1)}dw}{(w+b+1)^2(w+c+1)}\\
\approx&\frac1{bc}+\frac1{1+c}.
\end{align*}
Hence for $b<1$ (in this case $a<2$ by assumption) we obtain
$$F(a,b,c)\approx a^{5/2}\(\frac1{bc}+\frac1{1+c}\)+\frac1{1+c}\approx \frac{a^{5/2}}{bc}+\frac{1}{1+c},$$
which is again equivalent to (\ref{int3}).
\end{proof}

\begin{lemma}\label{lemma:int4}
Let $a>b>c>0$. Assume also that $(\star)$ $a\approx b$ if $a\ge1$. Then we have
\begin{equation}
\int_0^a\frac{(w+b)^{3/2}e^{-w}dw}{\sqrt w (w+c)}\approx (1\wedge a)+\frac{b^{3/2}}{\sqrt{c(1+c)}}.
\end{equation}
\end{lemma}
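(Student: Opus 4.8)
The plan is to split the integral into two model pieces and then treat $a<1$ and $a\ge1$ separately, calling on the hypothesis $(\star)$ only in the latter range.

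Writing $(w+b)^{3/2}\approx w^{3/2}+b^{3/2}$ (with absolute constants, since for positive numbers $(u+v)^{3/2}$ is comparable to $u^{3/2}+v^{3/2}$) reduces the problem to
$$\int_0^a\frac{(w+b)^{3/2}e^{-w}}{\sqrt w\,(w+c)}\,dw\;\approx\;\underbrace{\int_0^a\frac{w\,e^{-w}}{w+c}\,dw}_{K}\;+\;b^{3/2}\underbrace{\int_0^a\frac{e^{-w}}{\sqrt w\,(w+c)}\,dw}_{J}.$$
For $J$: the substitution $w=u^2$ together with $\arctan x\approx x\wedge1$ gives, for any $T>0$, $\int_0^T\frac{dw}{\sqrt w\,(w+c)}\approx\tfrac1{\sqrt c}\wedge\tfrac{\sqrt T}{c}$; taking $T=a$ when $a<1$ (where $e^{-w}\approx1$ on $(0,a)$ and $c<a$) and $T=1$ when $a\ge1$ (where the exponential confines the mass to $(0,1)$, the tail $\int_1^a$ being bounded by $\tfrac{e^{-1}}{1+c}$, hence dominated), one obtains $J\approx\tfrac1{\sqrt{c(1+c)}}$ in both cases. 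For $K$: if $a<1$ then $e^{-w}\approx1$ and $K\approx\int_0^a\tfrac{w}{w+c}\,dw\approx a=1\wedge a$ (the lower bound from $\tfrac{w}{w+c}\ge\tfrac{w}{w+a}$, using $c<a$); if $a\ge1$ and $c\le1$ then $\tfrac w2e^{-w}\le\tfrac{we^{-w}}{w+c}\le e^{-w}$ on $(0,1)$ gives $K\approx1=1\wedge a$; if $a\ge1$ and $c>1$ one gets only $K\approx\tfrac1c$, but then $b>c>1$ forces $b^{3/2}J\approx\tfrac{b^{3/2}}{\sqrt{c(1+c)}}\approx\tfrac{b^{3/2}}{c}>1$, so this loss is absorbed by the larger term. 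Adding the pieces, $K+b^{3/2}J\approx(1\wedge a)+\tfrac{b^{3/2}}{\sqrt{c(1+c)}}$ in every case. Here $(\star)$ enters to certify that when $a\ge1$ (so $1\wedge a=1$) one has $b\approx a\gtrsim1$, hence $\tfrac{b^{3/2}}{\sqrt{c(1+c)}}\gtrsim1$, which prevents the lower bound $\gtrsim 1\wedge a$ from being lost; in a more hands-on proof modelled on Lemmas~1--3 one would instead chop $(0,a)$ at the scales $c,\,1,\,a$ and use $(\star)$ to replace $w+b$ by $b$ throughout $(0,a)$ when $a\ge1$.

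I expect the only real difficulty to be organizational rather than analytic: each individual bound is routine, but the three scales $c<b<a$ interacting with the cutoff at $1$ produced by $e^{-w}$ force a handful of sub-cases, and in each one must check that the error terms stay dominated by $(1\wedge a)+\tfrac{b^{3/2}}{\sqrt{c(1+c)}}$ with constants independent of $a,b,c$. The one genuinely delicate point is keeping that domination uniform across the regime where $\tfrac{b^{3/2}}{\sqrt{c(1+c)}}$ is small — so the answer is $\approx1\wedge a$ — and the regime where it dominates.
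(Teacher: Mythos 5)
Your argument is correct, and it is organized differently from the paper's. The paper first isolates the case $c\ge 1$ (factoring $b^{3/2}/c$ out and bounding the remaining integral by constants), and for $c<1$ substitutes $w=cu$; there the hypothesis $(\star)$ is used genuinely when $a\ge1$, to replace $u+\tfrac bc$ by $\tfrac bc$ on the whole range $[0,a/c]$, while the splitting of the power $(u+\tfrac bc)^{3/2}\approx u^{3/2}+(\tfrac bc)^{3/2}$ is invoked only in the sub-case $a<1$. You instead apply that decomposition globally, $(w+b)^{3/2}\approx w^{3/2}+b^{3/2}$, which reduces everything to the two $b$-free model integrals $K$ and $J$; your evaluations of these ($J\approx 1/\sqrt{c(1+c)}$ always, $K\approx 1\wedge a$ except $K\approx 1/c$ when $c>1$, with the deficit absorbed because $b>c>1$ forces $b^{3/2}/\sqrt{c(1+c)}\gtrsim1$) are all correct, and the constants are absolute. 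What your route buys is uniformity and a slightly stronger statement: as you can check from your own case analysis, $(\star)$ is never actually needed — the sub-case $a\ge1,\ c\le1$ is already settled by $K\approx1$, and the sub-case $c>1$ by $b>c>1$ — so your remark that $(\star)$ "certifies $b^{3/2}/\sqrt{c(1+c)}\gtrsim1$" is superfluous rather than a load-bearing step, and the equivalence in fact holds for all $a>b>c>0$. What the paper's route buys is that, once $(\star)$ is assumed, the $a\ge1$ case collapses to a one-line computation $I\approx b^{3/2}/\sqrt c$ with no case distinction on $c$ inside it. Both proofs are elementary and of comparable length; yours is the more self-contained of the two, since the hypothesis $(\star)$ (which is available in the application, where $a\approx b$ comes from $(y-x)^2/t\ge1$) plays no role.
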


\begin{proof}
Let us denote the integral by $I(a,b,c)$. Assume now that $c\ge1$. Then we have $a,b\ge1$ and get 
$$I(a,b,c)=\frac{b^{3/2}}{c}\int_0^a\frac{\(\frac{w}b+1\)^{3/2}e^{-w}dw}{\sqrt w \(\frac wc+1\)}\approx \frac{b^{3/2}}{c}.$$
It is equivalent to the thesis because $\frac{\sqrt{2}b^{3/2}}{\sqrt{c(1+c)}}>\frac{b^{3/2}}{c}>b^{1/2}>1$. In the case  $c<1$ we substitute  $w=cu$:
\begin{align}\label{wingint}
I(a,b,c)=\,c\int_0^{a/c}\frac{\(u+\frac bc\)^{3/2}e^{-cu}}{\sqrt u (u+1)}du.
\end{align}
For $a\ge1$ we use the assumption $(\star)$. It follows that $u+\frac bc\approx \frac bc$ for $u\in[0,\frac ac]$. Hence
\begin{align*}
I(a,b,c) \approx\,c\(\frac bc\)^{3/2}\int_0^{a/c}\frac{e^{-uc}}{\sqrt u (u+1)}du\approx \frac {b^{3/2}}{\sqrt{c}},
\end{align*}
which, as before, is equivalent to the thesis. For $a<1$ the formula (\ref{wingint}) gives us
\begin{align*}
I(a,b,c)\approx\,c\int_0^{a/c}\frac{udu}{u+1}+\frac{b^{3/2}}c\int_0^{a/c}\frac{e^{-uc}du}{\sqrt u (u+1)}du.
\end{align*}
The last integral is bounded by $\int_0^{1}\frac{e^{-u}du}{\sqrt u (u+1)}$ from below and by $\int_0^{\infty}\frac{du}{\sqrt u (u+1)}$ from above. Moreover
\begin{align*}
c\int_0^{a/c}\frac{udu}{u+1}= c\(\frac ac-\ln\(1+\frac ac\)\)\approx a,
\end{align*}
which ends the proof.
\end{proof}

\end{document}